\documentclass[10pt]{article}
\usepackage[utf8]{inputenc}
\usepackage[margin=1.44in]{geometry}
\usepackage{bm}
\usepackage{comment}
\usepackage{lscape}
\usepackage{float,enumitem}
\usepackage{amsmath,amssymb,color,CJK,mathrsfs}
\usepackage{amsthm,multirow}
\usepackage{algorithm, algpseudocode,hyperref}
\usepackage[title]{appendix}
\usepackage{graphicx,natbib}
\usepackage{subcaption}
\usepackage{authblk}

\allowdisplaybreaks

\def\d{\mathop{\rm d\!}\nolimits}

\def\llambda{\mathop{\boldsymbol\lambda}\nolimits}
\def\M{M}
\def\x{\bm{x}}
\def\pr{\text{pr}}
\def\y{\bm{y}}
\def\z{\bm{z}}
\def\F{\mathrm{F}}

\newtheorem{theorem}{Theorem}[section]

\newtheorem{lemma}[theorem]{Lemma}

\theoremstyle{definition}
\theoremstyle{remark}

\newtheorem{remark}{Remark}[section]
\numberwithin{equation}{section}
\theoremstyle{remark}

\newtheorem{proposition}{Proposition}[section]

\numberwithin{equation}{section}

\date{}
\begin{document}

\title{Empirical likelihood for Fr\'echet means on open books}
\author{Karthik Bharath}
\author{Huiling Le} 
\author{Xi Yan}
\affil{School of Mathematical Sciences, The University of Nottingham}
\author{ Andrew T A Wood}
\affil{Australian National University}



\maketitle

\begin{abstract}

Empirical Likelihood (EL) is a type of nonparametric likelihood that is useful in many statistical inference problems, including confidence region construction and $k$-sample problems. It enjoys some remarkable theoretical properties, notably Bartlett correctability. One area where EL has potential but is under-developed is in non-Euclidean statistics where the Fr\'echet mean is the population characteristic of interest. Only recently has a general EL method been proposed for smooth manifolds. In this work, we continue progress in this direction and develop an EL method for the Fr\'echet mean on a stratified metric space that is not a manifold: the open book, obtained by gluing copies of a Euclidean space along their common boundaries. The structure of an open book captures the essential behaviour of the Fr\'echet mean around certain singular regions of more general stratified spaces for complex data objects, and relates intimately to the local geometry of non-binary trees in the well-studied phylogenetic treespace. We derive a version of Wilks' theorem for the EL statistic, and elucidate on the delicate interplay between the asymptotic distribution and topology of the neighbourhood around the population Fr\'echet mean. We then present a bootstrap calibration of the EL, which proves that under mild conditions, bootstrap calibration of EL confidence regions have coverage error of size $O(n^{-2})$ rather than $O(n^{-1})$. 
\end{abstract}

\section{Introduction}
\subsection{Background and motivation}
The Fr\'{e}chet mean of a probability distribution $\mu$ is arguably the cornerstone of data analysis on a metric space $(\M,d)$. When $M$ is a smooth finite-dimensional manifold, considerable effort has been made to understand how the geometry of $M$ influences existence and uniqueness of the Fr\'echet mean \citep{HK, BA}, and to identify corresponding conditions that ensure consistency and a Central Limit Theorem (CLT) for its empirical version based on a random sample from $\mu$ \citep{BP1}; the limiting distribution is a Gaussian with full support on the tangent space of the population Fr\'{e}chet mean.  Theoretical complications in the CLT theory can arise even in the case of a smooth compact manifold; see \cite{EGHT} and \cite{HLW}.

An important example of a metric space in modern data analysis is a stratified space. These space are disjoint unions of manifolds, known as the strata, glued isometrically along their common boundaries (singularities), and occur naturally as representation spaces for various complex data objects, including trees, graphs, persistence diagrams from Topological Data Analysis, and quotients of manifolds under isometric group actions \citep{feragen2020statistics}. The study of Fr\'echet means on a stratified space $M$, and their corresponding asymptotic sampling properties, has been carried out by \cite{HHLMMMNOPS} when $M$ is the open book; by \cite{barden2013central} when $M$ is the phylogenetic treespace; and, more recently by \cite{mattingly2023central} for more general stratified spaces. In contrast to the manifold setting, the topology of the stratified space, owing to its singularities, plays a fundamental role in determining the asymptotic behaviour of the Fr\'{e}chet mean: it could exhibit a tendency to `stick' to lower-dimensional subspaces, resulting in Gaussian limiting distributions with support on a lower-dimensional subspace; or it could oscillate between full- and lower-dimensional subspaces, leading to a limit mixture distribution. 

The current state-of-the-art for asymptotic inference for the Fr\'{e}chet mean on manifolds and stratified spaces rests on using the above limiting Gaussian distributions. Computing, or approximating, the asymptotic covariance of the Gaussian is difficult in practice, and this difficulty has stymied progress towards inference.

In the Euclidean setting, inference using the empirical likelihood (EL) offers an efficient nonparametric alternative to traditional parametric likelihood-based inference \citep{ABO}. Central to the EL method is the Wilks' theorem that establishes asymptotic chi-square distribution for the EL ratio function, free from the asymptotic covariance parameter. Moreover, it is well-known that confidence regions for a population parameter can be constructed using the EL with asymptotic coverage accuracy as good as that for parametric likelihoods \citep[e.g,][]{ABO88}.

In directional and shape statistics, where $M$ is a sphere or its quotient (e.g. real or complex projective spaces), the Euclidean EL method for testing hypotheses on, and constructing confidence regions for, the extrinsic Fr\'{e}chet mean was successfully adapted by viewing $M$ as an embedded submanifold of a higher-dimensional ambient Euclidean space \citep{FHJW96, AMWO}. More recently, an EL method for the intrinsic Fr\'{e}chet mean on general manifolds $M$ was developed \citep{KO}. 

Sticky behaviour of the Fr\'echet mean on a stratified space $M$ is a major obstacle in developing an EL method for inference: the local structure of $M$ in and around its singularities can be quite varied and subtle, thus making it to difficult to precisely describe how the Fr\'echet mean behaves in such regions with a level of generality required for the development of an asymptotic theory; as such, the path towards a general EL framework on stratified $M$ is unclear. 

The purpose of this article is to make progress in this direction by focussing on a simple stratified $M$, the open book: a space obtained by gluing disjoint copies of a half spaces in $\mathbb R^p$ along their boundary hyperplanes, such that the singularities are of codimension one. We focus on the open book for the following reasons. First, every stratified space that is singular along a stratum of codimension one is locally homeomorphic to the open book \citep{goresky1988stratified}; developing an EL method for the open book that accommodates a sticky Fr\'echet mean on its codimension one strata will shed light on the challenges, and corresponding mitigation strategies, when moving onto general stratified spaces with strata of codimension greater than one \citep{BL}. Second, from a methodological perspective, the open book relates intimately to the neighbourhood structure of 
certain non-binary trees in the space of phylogenetic trees \citep{BHV}, whose geometry has now been extensively studied, both from statistical \citep[e.g.,][]{TN, willis2019confidence} and computational perspectives \citep[e.g.,][]{miller2015polyhedral, owen2011computing}, and used in various applications involving tree-structured data \citep[e.g.,][]{feragen2013tree}. 

\subsection{Contributions}
Our strategy for developing an EL framework for the Fr\'echet mean of $\mu$ on the open book $M$ is to consider its equivalent formulation, based on an estimating equation, for the Euclidean mean at the tangent cone of the Fr\'echet mean in $M$. Definition of the EL function at a point $x \in M$ depends on the location of $x$  amongst the top-dimensional and codimension one strata. Consequently, we derive a new Wilks' theorem in Theorem \ref{thm3} and establish a limiting chi-square distribution for the EL test statistic, with support and degrees of freedom determined by the dimension of the stratum in which the population Fr\'echet mean is located, potentially exhibiting sticky behaviour. 

Proof of the Wilks' theorem requires the EL test for the multivariate mean in $\mathbb R^p$ to be consistent against all simple alternatives. We were unable to find such a result in the literature. Theorem \ref{thm:alternative} establishes this fact, and may be of independent interest. 

We then consider bootstrap calibration of the confidence regions for the Fr\'echet mean obtained from  Wilks' theorem, and in Theorem \ref{thm:bootstrap} prove that such confidence regions enjoy benefits of reduced coverage error of order $O(n^{-2})$, similar to the Euclidean case. To the best of our knowledge, such a result has not been proved even when $M$ is a manifold. We then demonstrate the utility of the asymptotic results on a simulated and real-data example involving the 3-spider, a special case of the open book, pertaining to rooted phylogenetic trees on three leaves. 

\section{Empirical likelihood on the Euclidean space}
This section serves two purposes: to review key aspects of the empirical likelihood (EL) method when $M$ is the Euclidean space $\mathbb R^p, p \geq 1$, which are relevant to the sequel; and, to derive a new result on consistency of the EL test based on Wilks' theorem against simple alternatives in $\mathbb R^p$, needed later for the open book. Exposition in this section is based on material from \cite{ABO}. 

EL is a nonparametric method of inference based on a data-driven likelihood ratio function. For a given set of i.i.d data $x_1,\ldots,x_n$ and a distribution $\mu$, both on $\mathbb R^p$, consider
\[\psi_n(\mu):=\frac{L(\mu)}{L(\mu_n)}\thickspace,\]
where $\mu_n$ denotes the empirical distribution of $x_1,\ldots,x_n$, and  $L(\mu):=\prod_{i=1}^n\mu(\{x_i\})$
is the nonparametric likelihood of $\mu$. When $\mu\not=\mu_n$, $L(\mu)<L(\mu_n)$. If the probability measure $\mu$ places probability $p_i\geqslant0$ on the value $x_i$, then $\sum_{i=1}^np_i\leqslant1$ and $L(\mu)=\prod_{i=1}^np_i$, so that
\[\psi_n(\mu)=\prod_{i=1}^n (np_i ) \thickspace .\]

\vskip 6pt
For a family $\mathcal P$ of distributions on $\mathbb R^p$ with point masses at the data, define
\[
\mathcal R_n(x):=\sup_{\mu \in \mathcal P}\left\{\psi_n( \mu)|\sum_{i=1}^nx_i\mu(\{x_i\})=x\right\}.
\]
The empirical likelihood (or profile empirical likelihood ratio function) for the mean is defined via the optimisation problem
\begin{eqnarray}
\label{eqn2}
\mathbb R^p \ni x \mapsto \log(\mathcal{R}_n(x))= \max\sum_{i=1}^n\log (n p_i),\qquad\hbox{subject to }\left\{
\begin{array}{l}
\sum\limits_{i=1}^n p_i=1;\,\,p_i\geqslant0;\\
\sum\limits_{i=1}^n p_i(x_i-x)=0.
\end{array}
\right.
\end{eqnarray} 
%
%
The asymptotic distribution of $-2\log(\mathcal R_n(x_0))$ at the true population mean $x_0$ is then obtained by the following nonparametric analogue of Wilks’ theorem. Throughout, abusing notation, we use $\chi^2_q$ to denote both a chi-square distributed random variable with $q$ degrees of freedom and the corresponding distribution; context will disambiguate the two. 

\begin{theorem}[Wilks' Theorem on $\mathbb R^p$ \citep{ABO}]
Let $x_1,\cdots,x_n$ be i.i.d. random vectors from a distribution on $\mathbb R^p$ with mean $x_0$ and finite covariance matrix of rank $q>0$. Then, as $n\rightarrow\infty$, $-2\log(\mathcal R_n(x_0))$ converges in distribution to a $\chi^2_q$ random variable.
\label{thm0}
\end{theorem}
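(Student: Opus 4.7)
The plan is to reduce $-2\log(\mathcal{R}_n(x_0))$ to the quadratic form $n(\bar{x}-x_0)^{\top} S_n^{-} (\bar{x}-x_0)$ via a Lagrangian argument, and then invoke the multivariate CLT together with a Slutsky/continuous mapping step to identify the limit as $\chi^2_q$. Here $S_n^{-}$ denotes a generalized inverse of the sample covariance $S_n = n^{-1}\sum_i (x_i-x_0)(x_i-x_0)^{\top}$, which is needed to handle the rank-deficient case $q<p$.

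First, I would solve the convex program in (\ref{eqn2}) by Lagrange multipliers. Introducing a multiplier $\lambda\in\mathbb R^p$ for the mean constraint and a multiplier for the normalization $\sum p_i=1$, the KKT conditions give the well-known form
\begin{equation*}
p_i \;=\; \frac{1}{n}\,\frac{1}{1+\lambda^{\top}(x_i-x_0)},\qquad \sum_{i=1}^n \frac{x_i-x_0}{1+\lambda^{\top}(x_i-x_0)}\;=\;0.
\end{equation*}
The implicit equation determines $\lambda=\lambda_n(x_0)$. A key preliminary estimate is $\|\lambda_n\|=O_p(n^{-1/2})$; I would obtain this by writing the implicit equation as $S_n\lambda = (\bar x - x_0) + R_n$, bounding the remainder using $\max_i \|x_i-x_0\|=o_p(n^{1/2})$ (a standard consequence of finite second moments), and using that $S_n$ converges to a matrix with range equal to the support subspace of the limiting Gaussian. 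In the rank-deficient case one should in addition verify that $\lambda_n$ lies (up to $o_p$-terms) in the range of $S_n$, so that the inversion on that subspace is well defined.

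Next, plug the optimal $p_i$ into the objective and Taylor-expand $-2\log(1+\lambda_n^{\top}(x_i-x_0))$ to second order; using the constraint to eliminate the first-order term and the bound $\|\lambda_n\|=O_p(n^{-1/2})$ to control the cubic remainder (again using $\max_i\|x_i-x_0\|=o_p(n^{1/2})$), I would obtain
\begin{equation*}
-2\log \mathcal R_n(x_0) \;=\; n\,(\bar x-x_0)^{\top} S_n^{-}(\bar x-x_0) \;+\; o_p(1).
\end{equation*}
Finally, the multivariate CLT gives $\sqrt n(\bar x-x_0)\Rightarrow Z\sim N(0,\Sigma)$ with $\mathrm{rank}(\Sigma)=q$, and $S_n\to\Sigma$ in probability; Slutsky plus the continuous mapping theorem applied on the range of $\Sigma$ yields $n(\bar x-x_0)^{\top} S_n^{-}(\bar x-x_0)\Rightarrow Z^{\top}\Sigma^{-}Z$, whose distribution is $\chi^2_q$ since $\Sigma^{1/2}\Sigma^{-}\Sigma^{1/2}$ is a rank-$q$ orthogonal projection.

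I expect the main obstacle to be the rank-deficient case $q<p$. In the full-rank situation the argument above is routine, but when $\Sigma$ is singular the Lagrange multiplier $\lambda_n$ is not uniquely determined, $S_n$ is only invertible on its range, and one must argue that the data lie (with probability one) in the $q$-dimensional affine subspace $x_0+\mathrm{range}(\Sigma)$, so that the EL problem reduces to the full-rank EL problem on that subspace. This reduction is where careful bookkeeping about the support of $\mu$, the range of $S_n$ versus that of $\Sigma$, and uniqueness of the relevant component of $\lambda_n$ must be handled. Once this reduction is in place, the full-rank Owen argument delivers the conclusion.
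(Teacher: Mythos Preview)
Your proposal is correct and follows the standard Owen argument; the paper does not actually prove this theorem but merely cites it from \cite{ABO} and sketches the key step, namely that the leading term of $-2\log(\mathcal R_n(x_0))$ is the quadratic form $n(\bar x_n-x_0)^\top \hat V^{-1}(\bar x_n-x_0)$ in the full-rank case. Your outline matches this sketch, with the additional (and appropriate) care of handling the rank-deficient case via reduction to the affine support subspace.
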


Typically, $q=p$ and the covariance matrix is of full rank, but this is not a requirement. In practice, however, rank of the sample covariance matrix is used upon assuming that $n > p$. EL hypothesis tests for the mean reject $H_0: E[x_1]=x_0$ when $-2\log(\mathcal R_n(x_0))<r_0$, where $r_0$ is an appropriate percentile of the $\chi^2_q$ distribution. The corresponding confidence regions assume the form 
\[
C_n(r_0):=\big\{x\mid-2\log(\mathcal R_n(x))\geqslant r_0\big\}.
\] 
An attractive property of  EL in the Euclidean case  is that that confidence region $C_n(r_0)$ is a convex subset of $\mathbb R^p$ since it is a (super-)level set of the convex function $x \mapsto -2\log(\mathcal R_n(x))$. 

\vskip 6pt
Using $\bar x_n$ to denote the sample mean of $x_1,\ldots,x_n$, proof of Wilks' theorem is based on the fact that the leading term of $-2\log(\mathcal R_n(x_0))$ is 
\[
n(\bar x_n-x_0)^\top \hat V^{-1}(\bar x_n-x_0), \quad \hat{V}:=\frac{1}{n}\sum_{i=1}^n(x_i-x_0)(x_i-x_0)^\top,
\]
assuming that the population covariance matrix has full rank $p$ and $n>p$.  Note that this leading term equals the Hotelling's $T^2$ statistic with an error of order $O_p(n^{-1})$. Bootstrap calibration for the region $C_n(r_0)$ reduces its coverage error to $O(n^{-2})$ under mild moment and smoothness conditions; see Appendix B of \cite{FHJW96}. 
So far as we are aware, no results have been published  on what happens to the EL ratio $\mathcal{R}_n(x)$ in (\ref{eqn2}) when $x \neq x_0$ is a fixed alternative, where as before $x_0$ is the population mean.  We have the following result, which will play in important role when deriving a type of Wilks' theorem for the Fr\'echet mean on the open book. 

\begin{theorem}[Consistency under simple alternatives]
\label{thm:alternative}
Under the assumptions of Theorem \ref{thm0}, with $x \neq x_0$ fixed,
$\mathcal{R}_n(x) \rightarrow 0$ in probability as $n \to \infty$.
\end{theorem}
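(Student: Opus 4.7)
The plan is to reduce the multivariate claim to the scalar case by linear projection and then lower-bound the empirical likelihood by evaluating its dual Lagrangian form at an explicit, sub-optimal multiplier.

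First I would set $v=x_0-x\neq 0$. Any weights $\{p_i\}$ satisfying the vector constraint $\sum_i p_i(x_i-x)=0$ automatically satisfy the scalar constraint $\sum_i p_i(v^\top x_i-v^\top x)=0$, so the $p$-dimensional feasible set is contained in the one-dimensional feasible set associated with the projected data $Y_i:=v^\top x_i$ and target $\mu:=v^\top x$. Hence $\mathcal{R}_n(x)\leq \mathcal{R}_n^{(1)}(\mu)$, where $\mathcal{R}_n^{(1)}$ denotes the scalar EL profile built from $Y_1,\dots,Y_n$; with $\mu_0:=v^\top x_0$ we have $\mu_0-\mu=\|v\|^2>0$ and $\mathrm{Var}(Y_1)<\infty$. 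It therefore suffices to prove the scalar version of the theorem.

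Next I would invoke the Lagrangian representation $-\log\mathcal{R}_n^{(1)}(\mu)=\max_\lambda \sum_{i=1}^n \log(1+\lambda(Y_i-\mu))$, with the maximum over $\lambda$ for which every summand is defined, and plug in the explicit choice $\lambda_n = c/\sqrt n$ for a small positive constant $c$ (assuming without loss of generality $\mu_0>\mu$). The finite second moment supplies, for any $a>0$,
\[
P\!\left(\max_i|Y_i|>a\sqrt n\right)\leq \frac{E\bigl[Y_1^2\,\mathbf{1}(|Y_1|>a\sqrt n)\bigr]}{a^2}\longrightarrow 0,
\]
so $\max_i|Y_i-\mu|=o_P(\sqrt n)$; in particular $z_i:=\lambda_n(Y_i-\mu)$ satisfies $\max_i|z_i|=o_P(1)$, and $\lambda_n$ is feasible with probability tending to one. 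On this event the expansion $\log(1+z_i)=z_i-z_i^2/2+O(z_i^3)$ applies uniformly, so using $\sum_i|z_i|^3\leq \max_i|z_i|\cdot\sum_i z_i^2=o_P(1)$ and the law of large numbers,
\[
-\log\mathcal{R}_n^{(1)}(\mu)\;\geq\; \sum_{i=1}^n\log(1+z_i) \;=\; c\sqrt n\,(\bar Y_n-\mu) - \frac{c^2}{2n}\sum_{i=1}^n(Y_i-\mu)^2 + o_P(1) \;=\; c\|v\|^2\sqrt n + O_P(1),
\]
which diverges to $+\infty$ in probability. Combined with the projection inequality this yields $\mathcal{R}_n(x)\to 0$.

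The main obstacle is the feasibility of the Lagrange multiplier: since finite variance permits unbounded $Y_i$, no fixed $\lambda$ can keep every $1+\lambda(Y_i-\mu)$ positive as $n$ grows. The scaling $\lambda_n\propto 1/\sqrt n$ is precisely the rate at which the tail bound $\max_i|Y_i|=o_P(\sqrt n)$ secures feasibility with high probability while the shift $\bar Y_n-\mu\to\|v\|^2$ still delivers a lower bound of order $\sqrt n$; once this scaling is in place, the remainder is routine Taylor bookkeeping.
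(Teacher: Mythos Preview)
Your proof is correct and takes a genuinely different route from the paper. The paper argues via the asymptotic behaviour of the EL statistic under \emph{local} alternatives: it invokes the result that $-2\log\mathcal{R}_n(x_0+n^{-1/2}a)\to\chi_p^2(a^\top\Sigma_0^{-1}a)$ in distribution, proves a preliminary lemma on stochastic monotonicity of the noncentral chi-squared in its noncentrality parameter, and then uses concavity of $x\mapsto\log\mathcal{R}_n(x)$ (hence monotonicity along rays from $\bar x_n$) to show that $-\log\mathcal{R}_n(x)$ eventually dominates $-\log\mathcal{R}_n$ evaluated at a local alternative with arbitrarily large noncentrality. Your argument is considerably more elementary: the projection onto $v=x_0-x$ reduces to the scalar case, and the dual bound at a sub-optimal multiplier $\lambda_n=c/\sqrt{n}$ avoids both the local-alternative machinery and the chi-squared lemma, using only $\max_i|Y_i|=o_P(\sqrt{n})$ from finite variance together with a second-order Taylor expansion. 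Your route also does not require the covariance matrix to have full rank, an assumption the paper's proof makes explicitly; the paper's approach, by contrast, gives a more geometric picture tied to the locally quadratic structure of the EL surface, which may be of independent interest.
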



\section{The open book}
\label{sec:intrinsic}
\subsection{Background}

We use notation from \cite{HHLMMMNOPS} in what follows. Let $N=\mathbb R_{\geqslant 0} \times \mathbb R^{p-1}$ with boundary $\partial N=\{0\}\times\mathbb R^{p-1}$. The \emph{open book $\M$ with $\ell>2$ pages} is defined as the quotient of the disjoint union $N \times \{1,\ldots,\ell\}$ of closed half-spaces under the equivalence relation that identifies their boundaries.  More specifically, if $x=(0,u,j), \, y=(0,v,k) \in \partial N \times \{1, \ldots, \ell\}$, then $x \sim y$ if and only if $u=v \in \mathbb{R}^{p-1}$, regardless of whether $j=k$ or $j \neq k$. The common boundaries are said to be isometrically glued with respect to the standard Euclidean metric from the norm $\|\cdot\|$ on $\mathbb R^p$.

Each page $L_k:=N \times \{k\}, k=1\ldots,\ell$ is of dimension $p$ and pages are joined together at a $(p-1)-$dimensional spine $S$, which comprises the equivalence classes in $\cup_{k=1}^\ell (\partial N \times \{k\})$, determined using the equivalence relation defined above. The interior of a page $L_k$ is $L_k^o=L_k \backslash S$. Consequently, $\M=S \cup L_1^o\cup \cdots \cup L_\ell^o$. The space $\M$ is a stratified space comprising $\ell$ top strata, each a copy of $\mathbb R^p$. An open book with $p=1$ is referred to as the $\ell$-spider. 

The open book can be equipped with an intrinsic metric induced from the Euclidean metric on $\mathbb R^p$. For any two points $\x=(x,i)$ and $\y=(y,j)$ in $\M$, consider the metric 
\begin{equation}
\label{eq:metric}
d(\x,\y):=
\begin{cases}
\|x-y\| & \text{if } i=j;\\
\|x-Ry\| & \text{if } i \neq j,
\end{cases}
\end{equation}
where $R: \mathbb R^{p} \to \mathbb R^{p}$ is the reflection across the hypersurface $\{0\}\times\mathbb R^{p-1}$.  Specifically, if $y=(y^{(0)}, y^{(1)})$, where $y^{(0)} \geqslant 0$ and $y^{(1)} \in \mathbb{R}^{p-1}$, then $Ry = (-y^{(0)}, y^{(1)})$. We note that $d(\x,\y)$ is the intrinsic metric which represents length of the shortest path, the geodesic, in $M$ between any pair of points $x$ and $y$; evidently, geodesics between points in the interior of the same page 
are straight lines in $\mathbb R^p$. The curvature at any point, in the sense of Alexandrov, in the interior of a page is zero but on the spine is negative. Summarily, $(M,d)$ is a CAT(0) complete geodesic metric space, with global nonpositive curvature; see Appendix \ref{sec:app_cat} for relevant definitions. 

The appropriate notion of space of directions of geodesics emerging for a point $x \in M$ is that of the tangent cone, which generalizes the tangent space when $M$ is a manifold \citep[p.190][]{BH}. The tangent cone at a point $x$ in the interior of a page $L_k$  is identified with $\mathbb R^p$. However, when $x$ lies in the spine $S$, the tangent cone is $M$ itself, and not a vector space. 

Nevertheless, convenient global coordinates for a point $\x$ in $\M$ may be prescribed in the following manner: if $\x\in L_k$ then $\x=(x,k)$, where $x=(x^{(0)},x^{(1)})\in N$, $x^{(0)}\in\mathbb R_{\geqslant 0}$,  $x^{(1)}\in\mathbb R^{p-1}$ and $k\in\{1,\ldots,\ell\}$; use $\x=x^{(1)}\in\mathbb R^{p-1}$ when $\x\in S$, in order to avoid the redundant $0\in\mathbb R_{\geqslant0}$ and the arbitrariness in $k$. The spine $S$, a set of equivalence classes, is identified with $\mathbb R^{p-1}$. 
 

\subsection{Fr\'echet mean and its relationship to the Euclidean mean}
Without loss of generality, we assume that the support of $\mu$ has non-empty intersection with all pages, and $\mu$ hence decomposes as
\begin{equation}
\mu(A)=w_0 \mu_0(A \cap S)+\sum_{k=1}^\ell  w_k \mu_k(A \cap L_k^o),
\label{eq:decomposition}
\end{equation}
for any Borel set $A$ in $\M$, where $\{\mu_k, k=1,\ldots,\ell\}$ is a disintegration $\mu$ such that $\mu_k(L^o_k)=1$ for every $k$ \citep[Lemma 2.1][]{HHLMMMNOPS}.   We assume that $w_k:=\mu(L^o_k)>0$ for all $k=1,\ldots,\ell$. 

The Fr\'echet mean of $\mu$ on the open book $\M$ is defined as the minimizer of the Fr\'echet function
\begin{equation}
\label{eq:frechet}
	\M \ni x \mapsto \F_\mu(\x):=\int_{\M}d(\x,\y)^2\d\mu(\y), 
\end{equation}
where $d(\cdot, \cdot)$ is as defined in \eqref{eq:metric}. Nonpositive curvature of $M$ guarantees that the Fr\'echet mean of $\mu$, is unique whenever $\F_\mu(x) < \infty$ for some $x \in M$ \citep[Proposition 4.3][]{KTT}.

Denote by $\x_0$ the Fr\'echet mean of $\mu$. Study of $\x_0$ in $\M$ is aided by $l$ maps, one for each page in $M$, and another on the spine $S$, using which the Fr\'echet mean of $\mu$ on $M$ may be related to the Euclidean mean of the pushforward of $\mu$ under the maps. The maps plays an important role in development of the EL method on $M$. 

The \emph{folding map} $F_k:\M \to \mathbb R^p$, $k=1,\ldots,\ell$ is defined as
\[
F_k(\x)=
\begin{cases}
x & \text{if } \x=(x,k);\\
Rx& \text{if } \x=(x,j) \text{ and } j \neq k,
\end{cases}
\]
for the reflection $R$ as defined above. Then, the pushforward $\mu \circ F_k^{-1}$ for each $k$ is a distribution on $\mathbb R^p$, and $F_k(X)$ is a Euclidean random variable when $X$ is a random variable on $\M$ with distribution $\mu$. The requirement for the point $\x_0=(x_0,k)$ on page $k$ (and not on the spine) to be the Fr\'echet mean of $\mu$ is characterized by
\begin{equation}
\int_{\M}F_k(\x)\,\d\mu(\x)=F_k((x_0,k))=x_0.
\label{eqn1b}
\end{equation}
This implies that, in this case, $\x_0=(x_0,k)$ is the Fr\'echet mean of $\mu$ if and only if $x_0$ is the Euclidean mean of $\mu \circ F^{-1}_k$ on $\mathbb R^p$. The Fr\'echet mean $\x_0$ in this case is said to be \emph{non-sticky}. 

For $\x_0=(x^{(0)}_{0},x^{(1)}_{0})$ on the spine, represented by $x_{0}^{(1)}\in\mathbb R^{p-1}$, another map is required to characterize the Fr\'echet mean. Precisely, the point $\x_0=(x^{(0)}_{0},x^{(1)}_{0})$ is the Fr\'echet mean of $\mu$ if the following two conditions are satisfied:
\begin{align}
\displaystyle\int_{\M}P_s(\x)\,\d\mu(\x)&=P_s(\x_0)=x_{0}^{(1)};\nonumber \\
\displaystyle\int_{\M}\langle F_j(\x),e_j\rangle\,\d\mu(\x)&\leqslant 0,\quad j=1,\cdots,\ell,
\label{eqn1c}
\end{align}
where $P_s: M \to S$ is the projection operator such that $P_s(\x)=x^{(1)}$ denotes the projection of a point $\x = (x^{(0)}, x^{(1)}) \in \M$ on to the spine $S$, and $e_j$ is the `outward' unit vector that is tangent to page $j$ and is orthogonal to $S$ \citep{HHLMMMNOPS}. 
Under our assumption that the support of $\mu$ has non-empty intersection with all pages, the inequalities in the constraints given in \eqref{eqn1c} can include at most one equality. That is, two situations may arise: (i) for each $j=1, \ldots , \ell$, 
\[
\int_{\M} \langle F_j(\y),e_j \rangle \d\mu(\y) < 0,
\]
and the Fr\'echet mean $\x_0$ is said to be \emph{sticky}; (ii) for a single $k=1, \ldots , \ell$, 
\[
\int_{\M} \langle F_k(\y),e_k \rangle \d\mu(\y) =0,
\]
where the corresponding integral involving $F_j$ is negative for each $j \neq k$; in this case the Fr\'echet mean $\x_0$ is said to be \emph{half-sticky}.

In the former case, the $\x_0$ is the Fr\'echet mean of $\mu$ if and only if $x^{(1)}_{0}$ is the Euclidean mean of $\mu \circ P_s^{-1}$ on the spine $S\cong \mathbb R^{p-1}$. However, in the latter case, characterisation is direction-dependent, and depends on both $P_s$ and the folding the maps: with those unit vectors $u \in \mathbb{R}^p$ for which $u^\top e_k>0$, it requires the Euclidean mean of $\mu \circ F_k^{-1}$ to equal $x^{(1)}_{0}$; on the other hand, for those $u$ such that $u^\top e_k \leq 0$, it requires the Euclidean mean of $\mu \circ P_s^{-1}$ to equal $x^{(1)}_{0}$.


\section{Intrinsic empirical likelihood on the open book}
Given a random sample $\x_1,\cdots,\x_n$ from distribution $\mu$ on $\M$, the definition, and computation, of EL at a point $\x \in \M$, depends whether $\x$ is on or off the spine. Similarly, asymptotic results for the sample Fr\'echet mean, including the corresponding Wilks' result, depend on the location of the population mean $\x_0$, i.e. whether $\x_0$ is on or off the spine.  

Denote by $\bar \x_n$ the unique sample Fr\'echet mean obtained as the minimizer of \eqref{eq:frechet} with $\mu$ as the empirical measure on observations $\x_1,\cdots,\x_n$. 

\subsection{Away from the spine}
\label{sec:EL_nonspine}
 The empirical likelihood at $\x=(x,k) \in \M$, away from the spine on page $k$, i.e. $x=(x^{(0)}, x^{(1)})$ where $x^{(0)}>0$ and $x^{(1)} \in \mathbb{R}^{p-1}$, is defined using the folding map $F_k$. The characterization \eqref{eqn1b} implies EL for the Fr\'echet mean is given by
%
%
%
\begin{eqnarray}
\log(\mathcal R_n((x,k))):=\max\sum_{i=1}^n\log (n p_i) \qquad\hbox{subject to }\left\{
\begin{array}{l}
\sum\limits_{i=1}^n p_i=1;\,\,p_i\geqslant0;\\
\sum\limits_{i=1}^n p_iF_k(\x_i)=x.
\end{array}
\right.
\label{eqn1d}
\end{eqnarray} 
A feasible solution to problem (\ref{eqn1d}) exists if and only if $\x$ lies in the convex hull of $F_k(\x_1), \ldots , F_k(\x_n)$; if no feasible solution exists then $\log\left ( \mathcal{R}(x,k) \right )=- \infty$.
%



\subsection{On the spine}
\label{sec:EL_spine}
We shall represent $\x$ on the spine $S$ by its $\mathbb R^{p-1}$-coordinate $x^{(1)}$. Then, for an $\x=x^{(1)}$ on $S$ to be the Fr\'echet mean of $\mu$, the characterisation in \eqref{eqn1c} leads to the following formulation of EL:
\begin{eqnarray}
\begin{array}{lcl}
\log(\mathcal R_n(\x)):=\max\sum\limits_{i=1}^n\log (n p_i) ,
\quad \hbox{subject to }\left\{
\begin{array}{l}
\sum\limits_{i=1}^n p_i=1;\,\,p_i\geqslant0;\\
\sum\limits_{i=1}^n p_iP_s(\x_i)=x^{(1)};\\
\sum\limits_{i=1}^np_i\langle F_j(\x_i),e_j\rangle\leqslant0,\quad j=1,\cdots,\ell.
\end{array}
\right.
\end{array}
\label{eqn1e}
\end{eqnarray}
It is convenient to link the above formulation with inequality constraints to an optimisation problem with equality constraints. To this end, let $\{p_1^{[0]}, \ldots , p_n^{[0]}\}$ denote the solution to the following optimisation problem:
\begin{equation}
\log \left (\mathcal{R}_n^{[0]}(\x)  \right ):=\textrm{max} \sum_{i=1}^n \log (n p_i )  \hskip 0.3truein \textrm{subject to} \hskip 0.1truein
\begin{cases} \sum\limits_{i=1}^n p_i=1; \, p_i \geqslant 0;\\
\sum\limits_{i=1}^n p_i P_s(\x_i)=x^{(1)}.
\end{cases}
\label{unconstrained}
\end{equation}
The solution to (\ref{unconstrained}) exists and is unique if $x^{(1)}$ lies in the convex hull of $\{P_s(\x_i): i=1, \ldots , n\}$; otherwise, (\ref{unconstrained}) has no feasible solutions, and the solution to (\ref{unconstrained}) is minus infinity.

For $j=1, \ldots , \ell$, let $\{p_1^{[j]}, \ldots , p_n^{[j]}\}$ be the solution to
\begin{equation}
\log \left ( \mathcal{R}_n^{[j]}(\x)\right):=\textrm{max} \sum_{i=1}^n \log (n p_i ), \quad \textrm{subject to} \hskip 0.1truein
\begin{cases} \sum\limits_{i=1}^n p_i=1; \, p_i \geqslant 0;\\
\sum\limits_{i=1}^n p_i F_j(\x_i)=(0,x^{(1)}).
\end{cases}
\label{constrained_plus_one}
\end{equation}
Note that 
\[
\sum_{i=1}^n p_i F_j(\x_i)=(0,x^{(1)}) 
\]
if and only if the following two conditions are both satisfied:
\[
\sum_{i=1}^n p_i P_s(\x_i) = x^{(1)}, \qquad \sum_{i=1}^n p_i  \langle F_j(\x_i), e_j \rangle = 0;
\]
This observations leads to the following result.
\begin{proposition}[EL on the spine]
\label{prop:ELbook_spine}
Assume that $\x = x^{(1)}$ lies on the spine, $S$.
\begin{enumerate}
\item[(i)] Suppose that $x^{(1)}$ lies in the interior of  the convex hull of $P_s(\x_1), \ldots , P_s(\x_n)$ and that
   \begin{equation}
\sum_{i=1}^n p_i^{[0]} \langle F_j(\x_i), e_j \rangle < 0,\qquad j=1,\ldots,\ell,
\label{test_case}
   \end{equation}
where $ ( p_i^{[0]}  )_{i=1}^n$ is the solution to (\ref{unconstrained}).   Then $ ( p_i^{[0]} )_{i=1}^n$ is also the solution to (\ref{eqn1e}) and therefore  $\log(\mathcal{R}_n(\x))=\log(\mathcal{R}_n^{[0]}(\x))$.\\
\item[(ii)] If equation (\ref{test_case})  fails for some $j=1, \ldots , \ell$, and therefore fails for precisely one such $j$, then  the solution to (\ref{eqn1e}) is given by 
\begin{equation}
\log(\mathcal{R}_n(\x))=\max_{k=1, \ldots , \ell} \log(\mathcal{R}_n^{[k]}(\x)),
\label{max_case}
\end{equation}
where $\log(\mathcal{R}_n^{[j]}(\x))$ is the solution to (\ref{constrained_plus_one}).
\end{enumerate}
\end{proposition}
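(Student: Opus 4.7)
The plan is to analyze the constrained optimization (\ref{eqn1e}) via its Lagrangian, keying on which of the $\ell$ inequality constraints are active at the optimum. Throughout, write $V:=\log(\mathcal{R}_n(\x))$, $V_0:=\log(\mathcal{R}_n^{[0]}(\x))$, and $V_k:=\log(\mathcal{R}_n^{[k]}(\x))$. Part (i) is nearly immediate: if $(p_i^{[0]})$ solves (\ref{unconstrained}) and satisfies the strict inequalities in (\ref{test_case}), then $(p_i^{[0]})$ is also feasible for (\ref{eqn1e}); since (\ref{eqn1e}) is obtained from (\ref{unconstrained}) by adding constraints, $V\le V_0$, and this bound is attained at $(p_i^{[0]})$, giving $V=V_0$.

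For part (ii), the algebraic input is a sign identity for the folding maps. Using $\langle F_j(\x_i),e_j\rangle = x_i^{(0)}$ when $\x_i\in L_j^o$, $-x_i^{(0)}$ when $\x_i\in L_m^o$ with $m\neq j$, and $0$ when $\x_i\in S$, one verifies that for any $k_1\neq k_2$ in $\{1,\ldots,\ell\}$ and any probability weights $(p_i)$,
\[
\sum_{i=1}^n p_i\bigl(\langle F_{k_1}(\x_i),e_{k_1}\rangle+\langle F_{k_2}(\x_i),e_{k_2}\rangle\bigr)=-2\!\sum_{i:\,\x_i\in L_m^o,\,m\neq k_1,k_2}\!p_i\,x_i^{(0)}\le 0.
\]
Since $\ell>2$ and $\mu$ charges every page, a generic sample contains points in $L_m^o$ for every $m$; combined with strict positivity of $(p_i^{[0]})$, which follows from the standard log-barrier form $p_i^{[0]}=\{n(1+\lambda^\top(P_s(\x_i)-x^{(1)}))\}^{-1}$ under the interior-convex-hull hypothesis, the right-hand side is strictly negative. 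Applied to $(p_i^{[0]})$ this establishes the parenthetical claim: failure of (\ref{test_case}) for some $j$ forces failure at precisely one $j$.

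It remains to show $V=\max_k V_k$. The optimizer $(p_i^*)$ of (\ref{eqn1e}) must activate at least one inequality, for otherwise, by uniqueness in (\ref{unconstrained}), we would have $(p_i^*)=(p_i^{[0]})$, contradicting the hypothesis; the same sign identity applied to $(p_i^*)$ (whose entries are positive by the analogous log-barrier argument) then forces exactly one active index $k^*$. Hence $(p_i^*)$ is feasible for (\ref{constrained_plus_one}) with $j=k^*$, giving $V\le V_{k^*}\le\max_k V_k$. Conversely, let $k^{**}$ attain $\max_k V_k$ with maximizer $(p_i^{[k^{**}]})$; the sign identity with $k_1=k^{**}$, $k_2=j\neq k^{**}$, combined with the equality $\sum_i p_i^{[k^{**}]}\langle F_{k^{**}}(\x_i),e_{k^{**}}\rangle=0$, yields $\sum_i p_i^{[k^{**}]}\langle F_j(\x_i),e_j\rangle\le 0$, so $(p_i^{[k^{**}]})$ is feasible for (\ref{eqn1e}) and $V\ge V_{k^{**}}$. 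Combining gives (\ref{max_case}). The main obstacle is the sign identity together with positivity of the optimizing weights; once these are in place, complementary slackness reduces the $\ell$ inequalities in (\ref{eqn1e}) to a single equality, which is precisely what (\ref{constrained_plus_one}) encodes.
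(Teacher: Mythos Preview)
Your proof is correct and, in one respect, more complete than the paper's own argument. The paper proceeds by partitioning the feasible region $\Omega$ of (\ref{eqn1e}) according to which inequality constraint (if any) is active, invokes concavity of the objective plus convexity of $\Omega$ to rule out an interior optimum when (\ref{test_case}) fails, and cites \cite{HHLMMMNOPS} for the fact that at most one constraint can bind; it then asserts (\ref{max_case}) without explicitly checking that the optimizer of each (\ref{constrained_plus_one}) is feasible for (\ref{eqn1e}). You instead isolate the algebraic source of that fact---the sign identity $\sum_i p_i\langle F_{k_1}(\x_i),e_{k_1}\rangle+\sum_i p_i\langle F_{k_2}(\x_i),e_{k_2}\rangle=-2\sum_{m\neq k_1,k_2}\sum_{\x_i\in L_m^o}p_ix_i^{(0)}\le 0$---and use it twice: once to show the optimizer of (\ref{eqn1e}) activates exactly one constraint (giving $V\le\max_kV_k$), and once to show that the optimizer $(p_i^{[k^{**}]})$ of the best (\ref{constrained_plus_one}) automatically satisfies the remaining $\ell-1$ inequalities (giving $V\ge\max_kV_k$). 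This second application is precisely the step the paper leaves implicit, so your route is more self-contained. Two minor points of hygiene: your appeal to a ``generic sample'' hitting every page is a probabilistic assumption on a deterministic statement---it matches the paper's standing hypothesis that $w_k>0$ for all $k$, but you should flag it as a hypothesis on the sample rather than a consequence; and the positivity of $(p_i^*)$ via the log-barrier form requires the feasible region of (\ref{eqn1e}) to contain a strictly positive point, which your reverse-inequality argument actually furnishes (since $(p_i^{[k^{**}]})$ is such a point), so the logic is not circular but the order of presentation obscures this.
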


\vskip 0.1truein

\begin{remark}
Suppose (\ref{test_case}) fails for some $k=1, \ldots , \ell$; note that $k$ is unique if it exists.  However, the solution to (\ref{eqn1e}) will not necessarily satisfy both $\sum_{i=1}^n p_i \langle F_k(\x_i), e_k\rangle=0$ and $\sum_{i=1}^n p_i \langle F_j(\x_i), e_j\rangle <0, j \neq k$. This explains the reason for the formulation of Proposition \ref{prop:ELbook_spine}(ii).
\end{remark}

\subsection{Asymptotic results}
\label{sec:openbook_asymp}
The Fr\'echet mean $\x_0$ of $\mu$ may be non-sticky, half-sticky, or sticky, and the sample Fr\'echet mean $\bar \x_n$ converges with high probability to $\x_0$ as $n \to \infty$ in distinct ways, resulting in three different limiting distributions for its fluctuations about $\x_0$ \citep{HHLMMMNOPS}. In the non-sticky case, its limit distribution is a Gaussian with support on $\mathbb R^p$; in the sticky case, it is asymptotically Gaussian with support on the spine; and, for the half-sticky case,
$\pr(\bar{\x}_n \in S)\to 1/2$ as $n \to \infty$,
and its limit distribution is no longer Gaussian, but still related to one. These different limiting behaviours are also reflected in the limiting distribution of $\mathcal -2 \log (\mathcal R_n(x_0))$ as follows. 

\begin{theorem}[Wilks' theorem on open books]
Let $\x_1,\ldots, \x_n$ be independent random variables with common distribution $\mu$ on $M$ given in (\ref{eq:decomposition}), and assume that the following moment conditions hold:
\begin{equation}
\int_{x^{(1)} \in S} \vert \vert x^{(1)} \vert \vert^2 \d\mu_0(x^{(1)})< \infty, \hskip 0.2truein \int_{x \in L_k^0} \vert \vert x \vert \vert^2 \d \mu_k(x) < \infty, \hskip 0.2truein k=1, \ldots , \ell.
\label{moment_assumptions}
\end{equation}
\begin{enumerate}
\item[(i)] Suppose that the population  Fr\'echet mean $\x_0$ lies in the interior of page $k$, and is non-sticky.  If $F_k(\x_1)$ has a positive definite covariance matrix, then as $n \rightarrow \infty$, $-2 \log \left (\mathcal{R}_n(\x_0)\right )$ converges in distribution to $\chi^2_p$.
\item[(ii)]	Suppose that the Fr\'echet mean $\x_0$ lies in the spine and is sticky.  Then, $-2\log(\mathcal R_n(\x_0))$ converges to $\chi^2_{p-1}$ in distribution as $n\rightarrow\infty$.
\item [(iii)]Finally, suppose that the population  Fr\'echet mean $\x_0$ in half-sticky. Then, as $n\rightarrow\infty$, $-2\log(\mathcal R_n(\x_0))$ converges to $\frac{1}{2}\{\chi^2_p+\chi^2_{p-1}\}$ in distribution. 
\end{enumerate}
\label{thm3}
\end{theorem}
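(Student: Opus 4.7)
The strategy is to reduce the open-book EL at $\x_0$ to a Euclidean EL via one of the folding maps $F_k$ or the spine projection $P_s$, then invoke Theorem \ref{thm0} (and Theorem \ref{thm:alternative} in the half-sticky case). I address the three regimes in turn. For case (i), since $\x_0 = (x_0, k)$ lies in the interior of page $k$, $\mathcal{R}_n(\x_0)$ is given by (\ref{eqn1d}), and (\ref{eqn1b}) identifies $x_0$ as the Euclidean mean of $\mu \circ F_k^{-1}$. Theorem \ref{thm0} applied to the i.i.d.\ sample $\{F_k(\x_i)\}_{i=1}^n \subset \R^p$, whose covariance is positive definite by assumption, yields $-2\log\mathcal{R}_n(\x_0) \to \chi^2_p$.

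For case (ii), a standard Lagrange-multiplier expansion of the weights solving (\ref{unconstrained}) gives $p_i^{[0]} = 1/n + O_p(n^{-1/2})$ uniformly in $i$, so for each $j$ the sum $\sum_i p_i^{[0]}\langle F_j(\x_i), e_j\rangle$ converges in probability to $\int_\M \langle F_j, e_j\rangle\, \d\mu < 0$. Hence condition (\ref{test_case}) holds with probability tending to $1$, Proposition \ref{prop:ELbook_spine}(i) identifies $\mathcal{R}_n(\x_0)$ with $\mathcal{R}_n^{[0]}(\x_0)$, which is the Euclidean EL for the mean $x_0^{(1)}$ of the $(p-1)$-dimensional sample $\{P_s(\x_i)\}$, and Theorem \ref{thm0} yields $\chi^2_{p-1}$.

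Case (iii) is the delicate one. Let $k$ be the unique page with $\int_\M \langle F_k, e_k\rangle\, \d\mu = 0$, and set $A_n := \{\sum_i p_i^{[0]}\langle F_k(\x_i), e_k\rangle < 0\}$; the strict inequalities for $j \neq k$ continue to hold w.p.\ $\to 1$ since their population limits are strictly negative. On $A_n$, Proposition \ref{prop:ELbook_spine}(i) gives $\log\mathcal{R}_n(\x_0) = \log\mathcal{R}_n^{[0]}(\x_0)$; on $A_n^c$, Proposition \ref{prop:ELbook_spine}(ii) gives $\log\mathcal{R}_n(\x_0) = \max_j \log\mathcal{R}_n^{[j]}(\x_0)$, and for $j \neq k$, since $(0, x_0^{(1)})$ is not the mean of $\mu \circ F_j^{-1}$, Theorem \ref{thm:alternative} forces the maximum to be attained at $j = k$ with probability tending to $1$. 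Theorem \ref{thm0} then delivers, marginally, a $\chi^2_{p-1}$ limit for $-2\log\mathcal{R}_n^{[0]}(\x_0)$ and a $\chi^2_p$ limit for $-2\log\mathcal{R}_n^{[k]}(\x_0)$.

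The main obstacle is coupling the two branches, i.e.\ showing that the chi-square limits are asymptotically independent of the indicator $\mathbf{1}_{A_n}$ so that the mixture weights collapse to exactly $1/2$ each. Writing $\bar X_n^{(0)} := n^{-1}\sum_i \langle F_k(\x_i), e_k\rangle$ and $\bar P_n := n^{-1}\sum_i P_s(\x_i)$, a Lagrange-multiplier expansion of $p_i^{[0]}$ gives
\[
\sqrt{n} \sum_{i=1}^n p_i^{[0]}\langle F_k(\x_i), e_k\rangle = \sqrt{n}\,\bar X_n^{(0)} - \sigma_{01}^\top \Sigma_{11}^{-1}\sqrt{n}\,(\bar P_n - x_0^{(1)}) + o_p(1),
\]
where $\sigma_{01}$ and $\Sigma_{11}$ denote the cross- and marginal covariances of $\langle F_k(\x), e_k\rangle$ and $P_s(\x)$. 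The right side converges in distribution to the Gaussian residual $U_0 \sim N(0, \sigma_{00|1})$, with $\sigma_{00|1} = \sigma_{00} - \sigma_{01}^\top \Sigma_{11}^{-1} \sigma_{01}$, and $U_0$ is uncorrelated with, hence independent of, $T_1 := \lim \sqrt{n}(\bar P_n - x_0^{(1)})$. Consequently $\Prob(A_n) \to 1/2$, and via the block-inverse identity $(T_0, T_1)^\top \Sigma^{-1}(T_0, T_1) = U_0^2/\sigma_{00|1} + T_1^\top \Sigma_{11}^{-1} T_1$ with independent summands, the $\chi^2_{p-1}$ limit on $A_n$ is independent of $\mathbf{1}_{A_n}$ and the $\chi^2_p$ limit on $A_n^c$ is independent of $\mathbf{1}_{A_n^c}$ (because $U_0^2$ is independent of $\mathrm{sign}(U_0)$). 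Combining yields the advertised mixture $\tfrac{1}{2}\{\chi^2_p + \chi^2_{p-1}\}$.
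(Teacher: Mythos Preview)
Your proposal is correct and follows essentially the same route as the paper: part (i) is a direct application of the Euclidean Wilks theorem to $\{F_k(\x_i)\}$; part (ii) uses the Lagrange-multiplier expansion of the $p_i^{[0]}$ to show the inequality constraints are slack with probability tending to one, then invokes Proposition~\ref{prop:ELbook_spine}(i) and the Euclidean Wilks theorem in $\R^{p-1}$; and part (iii) uses Theorem~\ref{thm:alternative} to eliminate the pages $j\neq k$ from the maximum in (\ref{max_case}), then splits on the sign of the page-$k$ constraint. Your treatment of the coupling in part (iii)---the residual decomposition $U_0 = T_0 - \sigma_{01}^\top\Sigma_{11}^{-1}T_1$, the block-inverse identity, and the independence of $U_0^2$ from $\mathrm{sign}(U_0)$---is in fact more explicit than the paper's, which simply appeals to ``symmetry properties of the normal distribution'' for both the $1/2$--$1/2$ split and the conditional limits.
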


\section{Bootstrap calibration}
\label{sec:bootstrap}
 In many simulation studies of the practical performance of the EL method, it has been seen that inference based on Wilks' theorem tends not to be very accurate unless the sample size is large.  For example, it is typically the case that confidence regions for a  mean in a Euclidean setting tend to undercover when based on the limiting $\chi^2$ limiting distribution.  An effective and  practical method for improving the  inferential accuracy of EL is the use of bootstrap calibration.  The idea of applying bootstrap calibration to EL inference goes back to the earliest days of empirical likelihood \citep{ABO88,ABO}, but we suspect that bootstrap calibration has not been used as often in practice at it might have been.

Here, we consider  bootstrap calibration of EL confidence regions for the Fr\'echet mean $\x_0$ on $M$.  For $\alpha \in (0,1)$, a nominal $100(1-\alpha)\%$ confidence region for $\x_0$ is given by
\begin{equation}
\mathcal{C}_{1-\alpha}=\left \{x \in \M: -2 \log (\mathcal{R}(\x)) \leqslant c_q(\alpha)  \right \},
\label{CR_1}
\end{equation}
where $c_q(\alpha)$ is the tail probability function of the appropriate $\chi^2_q$ distribution such that $\pr[\chi^2_q \geqslant c_q(\alpha)]=\alpha$.   The bootstrap-calibrated confidence region is given by
\begin{equation}
\mathcal{C}^b_{1-\alpha}= \left \{ x \in \M: -2 \log (\mathcal{R}(\x))\leqslant c^b_q(\alpha) \right \},
\label{CR_2}
\end{equation}
where $c^b_q(\alpha)$ satisfies
\begin{equation*}
\pr[-2 \log (\mathcal{R}^\ast (\bar \x_n))  \geqslant \hat{c}_q(\alpha)\vert x_1, \ldots , x_n] \approx  \alpha,
\label{chat}
\end{equation*}
where (\ref{chat})  is approximate  due only to discreteness.  The probability on the LHS of (\ref{chat}) is conditional on the original sample $x_1, \ldots , x_n$ and $\mathcal{R}^\ast (\bar \x_n)$ is the empirical likelihood based on a bootstrap sample $\x_1^\ast, \ldots , \x_n^\ast$, sampled randomly with replacement from the original sample $\x_1, \ldots , \x_n$, and evaluated at the sample Fr\'echet mean $\bar \x_n$ of the original sample.  In practice we estimate $c^b_q(\alpha)$ by sampling $B$ bootstrap samples randomly, with replacement from the original sample, calculate
$u_b=-2 \log(\mathcal{R}^{(b)}\left (\bar \x_n)\right )$, $b=1, \ldots , B$, and then set
\[
c^b_q(\alpha):=u_{(\lfloor B(1-\alpha)\rfloor +1)},
\]
where $u_{(r)}$ is the $r$th smallest order statistic and $\lfloor \cdot \rfloor$ denotes the floor function, i.e. the largest integer less than or equal to the argument of the function.

For $\x \in M$ in a neighbourhood of the Fr\'echet mean$\x_0$, denote by $X_i(\x), i=1, \ldots , n$ the Euclidean random variables obtained by applying any of the folding maps $F_k$ or the projection map $P_s$, depending on the location of $\x \in M$. 
Denote by $X(\x)$ a Euclidean random variable with the same distribution as the $X_i(x)$.  Define the characteristic function of $X(\x_0)$ by
\[
\Psi(t):=E\left [\exp (\iota t^\top X(\x_0))  \right ],
\]
where $\iota$ is the unit imaginary number. Consider the following conditions:
\begin{equation}
\limsup_{\vert \vert t \vert \vert \rightarrow \infty} \vert  \Psi(t) \vert <1;
\label{Condition_1}
\end{equation}
and
\begin{equation}
E\left [\vert \vert X(\x_0) \vert \vert^{10} \right  ] < \infty.
\label{Condition_2}
\end{equation}

We prove the following result, which elucidates on how the bootstrap calibrated confidence region $\mathcal C^b_{1-\alpha}$ for $\x_0$ improves the asymptotic coverage rate of $\mathcal C_{1-\alpha}$ obtained via Wilks' theorem, and then follow with some remarks. 

\begin{theorem} 
\label{Theorem_5.1}
Let  $\x_1, \ldots , \x_n \in \M$ be i.i.d. random variables with distribution $\mu$.  Assume that $\mu$ has a unique Fr\'echet mean $\x_0$.  Suppose also that \eqref{Condition_1} and \eqref{Condition_2} hold and that the real-valued function $\x \mapsto \tau(x)$ defined in Appendix \ref{sec: Def5} has a continuous Hessian in a neighbourhood of $\x_0$.  Then, in either the sticky or non-sticky case,
\begin{equation}
\mathrm{pr}[\mathcal{C}^b_{1-\alpha} \ni \x_0]=1-\alpha +O(n^{-2}),
\label{Theorem_5_statement}
\end{equation}
where $\mathcal{C}^b_{1-\alpha}$ is as defined in (\ref{CR_2}).
\label{thm:bootstrap}
\end{theorem}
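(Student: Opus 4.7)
The plan is to reduce each of the two cases to a Euclidean bootstrap calibration problem for the empirical likelihood of a mean, for which the $O(n^{-2})$ coverage error under conditions \eqref{Condition_1} and \eqref{Condition_2} follows from the analysis of Appendix B of \cite{FHJW96}. The common thread is that in a neighbourhood of $\x_0$ determined by the stratum in which it lies, the EL function $\x \mapsto -2\log(\mathcal R_n(\x))$ can be identified with the Euclidean EL function for the mean of a single Euclidean random sample obtained via a fixed linearising map (a folding map $F_k$ or the projection $P_s$). Since the bootstrap calibration argument in the Euclidean case rests on an Edgeworth expansion of the distribution of the EL statistic, together with an analogous bootstrap Edgeworth expansion that matches the first two non-trivial terms, it suffices to verify that both expansions are valid here and that the Euclidean samples inherit Cram\'er's condition \eqref{Condition_1} and the tenth-moment condition \eqref{Condition_2}.

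In the non-sticky case, $\x_0$ lies in the interior of some page $L_k$. A quantitative stability argument for the sample Fr\'echet mean (using nonpositive curvature of $M$ and the CLT-type results from \cite{HHLMMMNOPS}) shows that, outside an event of probability $O(n^{-M})$ for every $M$, the sample Fr\'echet mean $\bar\x_n$ and any point $\x$ in the range over which the EL must be evaluated lie in a fixed compact neighbourhood of $\x_0$ inside $L_k^o$; on this event the definition \eqref{eqn1d} shows that $-2\log(\mathcal R_n(\x))$ coincides with the Euclidean EL ratio for the mean of the i.i.d.\ vectors $F_k(\x_1),\dots,F_k(\x_n)$ in $\mathbb R^p$, and the same identification applies to the bootstrap EL statistic. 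The classical bootstrap calibration then yields \eqref{Theorem_5_statement}. In the sticky case, $\x_0\in S$ and the strict inequalities $\int \langle F_j(\y),e_j\rangle\d\mu(\y)<0$ for every $j$ imply, by a Bernstein-type estimate applied to the empirical versions, that $\bar\x_n\in S$ and that the solution to \eqref{unconstrained} at $\x$ close to $\x_0$ automatically satisfies the strict inequalities in \eqref{test_case} outside an event of probability $O(n^{-M})$; by Proposition \ref{prop:ELbook_spine}(i), $-2\log(\mathcal R_n(\x))$ then coincides with the Euclidean EL for the mean of $P_s(\x_1),\dots,P_s(\x_n)$ in $\mathbb R^{p-1}\cong S$. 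The same identification extends to the bootstrap world, since bootstrap samples drawn from the original sample inherit analogous strict inequalities with the same uniformly small failure probability, and one again invokes the Euclidean result.

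The main obstacle is verifying that the Euclidean Edgeworth machinery does not merely carry over in a distributional sense but actually delivers coverage error $O(n^{-2})$ after all the reductions; concretely, one must (i) show that the high-probability events on which the reduction holds can be absorbed into the $O(n^{-2})$ error term rather than just an $o(1)$ term, which requires the tail bound on the misclassification/stickiness event to be $O(n^{-M})$ for some $M \geq 2$; (ii) check that both $F_k$ and $P_s$ push \eqref{Condition_1} and \eqref{Condition_2} forward to the respective Euclidean samples, which is immediate since these maps are affine on each page; and (iii) control the bootstrap analogues of the cumulants entering the Edgeworth expansion, which is where the smoothness of $\x\mapsto\tau(\x)$ in Appendix \ref{sec: Def5} enters, ensuring that the signed-root decomposition of $-2\log\mathcal R_n$ is a sufficiently smooth function of sample moments for the smooth-function-of-means Edgeworth theory to apply with two explicit correction terms. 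Once (i)--(iii) are in place, the matching of the bootstrap and true Edgeworth expansions to order $n^{-3/2}$ produces the stated $O(n^{-2})$ coverage error, uniformly across the sticky and non-sticky regimes.
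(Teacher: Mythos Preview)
Your reduction of the open-book EL to a Euclidean EL via $F_k$ (non-sticky) or $P_s$ (sticky), together with the transfer of conditions \eqref{Condition_1} and \eqref{Condition_2} through these affine maps, is correct and in fact more explicit than the paper, which takes this reduction for granted and states that its aim is only to explain where the sufficient conditions enter, deferring to \cite{DHR91} and \cite{FHJW96} for the expansions themselves.

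There is, however, a genuine gap in your final step. You assert that ``matching of the bootstrap and true Edgeworth expansions to order $n^{-3/2}$ produces the stated $O(n^{-2})$ coverage error'', but the expansions do \emph{not} match to that order: the $n^{-1}$ term in the true expansion involves $\tau(\x_0)$ while the bootstrap version involves $\tau(\bar\x_n)$, and since $\tau(\bar\x_n)-\tau(\x_0)=O_p(n^{-1/2})$ the naive discrepancy is $O_p(n^{-3/2})$, which a priori yields coverage error $O(n^{-3/2})$, not $O(n^{-2})$. The missing ingredient, which the paper isolates as its decisive third step, is a \emph{parity argument}: writing $\tau(\bar\x_n)=\tau(\x_0)+n^{-1/2}F_1(\x_0)+n^{-1}F_2(\x_0^\ast)$, the apparently $n^{-3/2}$-sized middle contribution $n^{-1}\cdot n^{-1/2}F_1(\x_0)$ to the coverage probability turns out to be $O(n^{-2})$ because of the even/odd structure of the Hermite polynomials arising in the Edgeworth expansion; the paper cites \cite{BNH} for this mechanism. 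This Taylor expansion of $\tau(\bar\x_n)$ is also exactly where the continuous-Hessian assumption on $\tau$ enters --- to control the remainder $F_2$ --- so your description of that assumption as ensuring ``the signed-root decomposition is a sufficiently smooth function of sample moments'' misidentifies its role.

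Two smaller points. First, a Bernstein-type inequality is not available under \eqref{Condition_2} alone; you only have tenth moments, so a Markov or Fuk--Nagaev bound is what delivers the polynomial decay $O(n^{-M})$ you need to absorb the misclassification event. Second, the paper explains \emph{why} the tenth-moment condition is the right one: inspecting the \cite{DHR91} expansion up to nominal size $O_p(n^{-3})$, the highest-moment terms are of the form $A^{jk\ell mn}A^jA^kA^\ell A^mA^n$, and $A^{jk\ell mn}(\x_0)=O_p(n^{-1/2})$ requires precisely finite tenth moments. Your proposal takes \eqref{Condition_2} as a black-box input without tracing its origin.
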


\begin{remark}
Bootstrap calibration of EL  has been familiar in the Euclidean setting since the earliest days of EL \citep{ABO88}.  Moreover, it has been stated previously that bootstrap calibration of EL reduces coverage error to $O(n^{-2})$ \citep[e.g., Appendix B][]{FHJW96}.  However,  so far as we are aware, Theorem \ref{thm:bootstrap} is the first statement of such a result under fully explicit sufficient conditions, even in the Euclidean case.  Specifically, the moment condition (\ref{Condition_2}) and the smoothness of the function $\tau$ in \eqref{eq:tau} do not seem to have been stated explicitly before.
\end{remark}
\begin{remark}
If the moment condition (\ref{Condition_2})  is weakened to $\mathbb{E}[\vert \vert X(\x_0) \vert \vert^6] <\infty$ and all the other conditions hold with the possible exception of the smoothness assumption on $\tau (\x)$, which is now not needed, then \eqref{Theorem_5_statement} holds with $O(n^{-1})$ replacing $O(n^{-2})$.
\end{remark}

\begin{remark}
Bootstrap calibration may also be applied to EL-based hypothesis testing, e.g. $k$-sample problems for Fr\'echet means.  In this case, if each of the $k$ populations satisfies the assumptions of Theorem \ref{thm:bootstrap}, then the bootstrap-calibrated  test will have actual size differing from the nominal size by $O(n^{-2})$, as opposed to an $O(n^{-1})$ error in the case of a test based on Wilks' theorem.  See \cite{AMWO} for discussion of  bootstrap calibration for EL for the extrinsic Fr\'echet mean on the complex projective. 
\end{remark}

\begin{remark}
Asymptotically, Bartlett correction of EL and bootstrap calibration of EL produce confidence regions with the same coverage error order $O(n^{-2})$.  Our expectation is that bootstrap calibration will do considerably better than Bartlett correction for small-to-moderate values of $n$.  We do not provide any simulation evidence here, but see the discussion on p. 34 of \cite{ABO}.
\end{remark}

\begin{remark}
In the semi-sticky case, because of the mixture structure of the limit distribution given in Theorem (\ref{thm3}), part (iii), the coverage error is of size $n^{-1}$ as opposed to size $n^{-2}$.
\end{remark}

\section{Numerical illustrations on the 3-spider}
\label{sec:numerics}
\subsection{Algorithm for EL calculation}
We investigate utility of the respective Wilks' results for inference on the population Fr\'echet mean for the $\ell$-spider with $\ell=3$, a special case of the open book with $p=1$. Specifically, we examine rejection rates for the point null hypothesis $H_0:\x_0=\z$ and confidence regions constructed using both $\chi^2$ and bootstrap calibrated percentile points. 

Bootstrap calibration of the Wilks' statistic to test for Type I error is carried out in the usual manner by comparing  $-2\log \mathcal R_n(\z)$ to the empirical quantile from the sample $\{W^*_{1}, W^*_{2}, \ldots, W^*_{B}\}$ with $W^*_{b}:=-2\log \mathcal R_n(\bar \x^{(b)}_N), b=1,\ldots,B$, where $\bar \x^{(b)}_N$ is the sample Fr\'{e}chet mean of the $b$th bootstrap sample of size $N$ obtained by sampling with replacement. 

Let $\M$ be the 3-spider: the open book with three pages with $p=1$, sometimes, quite appropriately, referred to as the spider with three `legs'; in this particular context  we use `legs'. The three legs $L_k=\mathbb R_{\geqslant 0}$ with interiors $L^o_k=\mathbb R_{>0}$ for $k=1,2,3$ are joined together at the spine $\{0\}$. Since $p=1$, the second coordinate of $x=(x^{(0)},x^{(1)})$ in $\x=(x,k)$ is not required; we simply use $x$ for $x^{(0)}$. The folding map $F_k$ for page $k$ assumes the simple form
\[
F_k((x,j))=
\begin{cases}
\quad x&j=k;\\
-x & j \neq k.
\end{cases}
\]
The distance between two points $\x_1=(x_1,j)$ and $\x_2=(x_2,k)$ with $x_1>0,x_2>0 $ and $j,k \in\{1,2,3\}$ is then
\[
d(\x_1,\x_2)=
\begin{cases}
|x_1-x_2| & \text{if } j=k;\\
|x_1+x_2| & \text{if } j \neq k.
\end{cases}
\]

\vskip 6pt
Computation of $\log \mathcal R_n(\x)$ depends on whether $\x$ is on the spine $\{0\}$. When $\x \in L^0_k, k=1,2,3$ away from the spine, the optimisation in \eqref{eqn1d} reduces to
\begin{eqnarray}
\begin{array}{lcl}
&&\log(\mathcal R_n((x,k))=\max\sum\limits_{i=1}^n\log (n p_i),\\
&&\hbox{subject to }\left\{
\begin{array}{l}
\sum\limits_{i=1}^n p_i=1;\,\,p_i\geqslant0;\\
\sum\limits_{i=1}^np_i F_k(\x_i)=x,\quad \text{for } \x \in L^0_k.
\end{array}
\right.
\end{array}
\label{eqn3}
\end{eqnarray}
When $\x$ is on the spine, the optimisation in \eqref{eqn1e} reduces to computing
\begin{eqnarray}
\begin{array}{lcl}
&&\log(\mathcal R_n(\x))=\max\sum\limits_{i=1}^n\log (n p_i),\\
&&\hbox{subject to }\left\{
\begin{array}{l}
\sum\limits_{i=1}^n p_i=1;\,\,p_i\geqslant0;\\
\sum\limits_{i=1}^np_i F_k(\x_i)\leqslant 0,\quad k=1,2,3.
\end{array}
\right.
\end{array}
\label{eqn3a}
\end{eqnarray}
As discussed earlier with the open book, computation of \eqref{eqn3a} can be simplified by considering an equivalent formulation with two cases based on the observations $P_s(\x_1),\ldots,P_s(\x_n)$ described in \eqref{unconstrained} and \eqref{constrained_plus_one}, and summarized in Proposition \ref{prop:ELbook_spine}. However, for the $3$-spider it easier to work directly with \eqref{eqn3a}, characterized by the following two cases pertaining to behaviour of sample means $\bar \x_n^k:=(\bar x^k_n,k)=\frac{1}{n}\sum_{i=1}^n F_k(\x_i)$ of the Euclidean random variables $F_k(\x_1),\ldots,F_k(\x_n), k=1,2,3$  \citep[Lemma 5.2.2]{CharlesThesis}.

\begin{enumerate}[leftmargin=*]
 \item[(i)] When there exists a $k \in \{1,2,3\}$ such that  $\sum_{i=1}^n F_k(\x_i) \geq 0$, note that $\bar x_n^k \geqslant 0$ but $\bar x_n^j<0$ for $j \leqslant k$, and the optimisation in \eqref{eqn3a} is equivalent to one with the constraint $\sum_{i=1}^n p_iF_k(\x_i)=0$, in addition to $p_i \geqslant 0$ and $\sum_ip_i=1$. 
\item[(ii)] When for every $k=1,2,3$, $\sum_{i=1}^n F_k(\x_i)<0$, note that the  $\bar x_n^k<0, k=1,2,3$, and the optimisation in \eqref{eqn3a} is equivalent to one without an additional constraint to $p_i \geqslant 0$ and $\sum_ip_i=1$, which has the solution $p_1=\cdots=p_n=1/n$. The solution clearly satisfies $ \sum_ip_iF_k(\x_i) \leqslant 0$. 
\end{enumerate}
Algorithm \ref{algo1} summarizes computation of the EL for the 3-spider.


\begin{algorithm}[!htb]
\caption{: EL computation on the 3-spider}
\begin{algorithmic}[1]
\State \textbf{Input}: Data $\x_1,\ldots,\x_n$ and point $\x$.
\State \textbf{Output}: $\log \mathcal R_n(\x)$.
\State Compute the leg-dependent sample Euclidean means $\bar x_n^k=\frac{1}{n}\sum_i F_k(\x_i), k=1,2,3$.
\If {$\x$ lies on $L^0_k, k=1,2,3$ away from the spine},
\State compute $\log \mathcal R_n(\x)$ using \eqref{eqn3}.
\Else { When $\x$ lies on the spine},
\If{$\bar x_n^k<0, k=1,2,3$}
\State $\log \mathcal R_n(\x)=1$;
\If{there exists a $k$ such that $\bar x_n^k \geqslant 0$ and $\bar x_n^j<0, j\neq k$}
\State $\log \mathcal R_n(\x)=\log \mathcal R_n(0)$, under the constraint $\sum_{i=1}^n p_iF_k(\x_i)=0$.
\EndIf
\EndIf
\EndIf
\end{algorithmic}
\label{algo1}
\end{algorithm}
As  $n \to \infty$, Wilks' result in Theorem \ref{thm3} then implies that $-2\log \mathcal R_n(\x_0)$ converges in distribution: (i) 0 for the sticky case; (ii) $\chi^2(1)$ for the non-sticky case; and (iii) to the distribution function $H$ on the line given by
\[
H(x)=
\begin{cases}
0&x<0;\\
\frac{1}{2}&x=0;\\
\frac{1}{2}+\frac{1}{2}\thinspace G(x)&x>0, 
\end{cases}
\]
for the partially sticky case, where $G$ is the distribution function of a $\chi^2_1$ random variable. 
\subsection{Simulated data}
We consider four simulation settings to examine performance of Algorithm 1 corresponding the nature of the Fr\'{e}chet mean. The distribution $\mu$ in \eqref{eq:decomposition} is taken to be 
\begin{equation}
	\label{eq:mix_exp}
\mu=w_1E(a_1)+w_2E(a_2)+w_3E(a_3),
\end{equation}
 where $E(a)$ denotes the distribution function of an exponential random variable with mean parameter $a$ and $w_i\in [0,1]$ is the restriction of $\mu$ to leg $i$, with $w_1+w_2+w_3=1$. We generate $n$ points $\x_1,\ldots,\x_n$ with $n=100$ on the 3-spider under the following settings.  
\begin{enumerate}[leftmargin=*]
\itemsep 0em
\item[(a)] \emph{Non-sticky case}. $w_1=1/2,w_2=1/3, w_3=1/6$ and $a_1=1, a_2=2, a_3=1$ with true Fr\'{e}chet mean at 1/6 on leg 1. 
\item[(b)] \emph{Half-sticky case with mean on the spine}. $w_1=1/2,w_2=1/4, w_3=1/4$ and $a_1=1, a_2=1, a_3=1$ with true Fr\'{e}chet mean on the spine. 
\item[(c)] \emph{Half-sticky case with mean off the spine}. $w_1=1/2,w_2=1/4, w_3=1/4$ and $a_1=1, a_2=1, a_3=1$ with true Fr\'{e}chet mean off the spine on leg 1 at 1/6. 
\item[(d)] \emph{Sticky case}. $w_1=1/3,w_2=1/3, w_3=1/3$ and $a_1=1, a_2=1, a_3=1$ with true Fr\'{e}chet mean on the spine. 

\end{enumerate}

Next, we compute rejection rates of the Wilks' test based on the $\chi^2$ distribution for a hypothesis test for the Fr\'echet mean $\x_0$ of $\mu$ under the null and the alternative hypotheses
\[
H_0: \x_0=\bm z \quad vs \quad H_1: \x_0 = \bm z_1 (\neq \bm z).
\]

Observations are generated by sampling from $\mu$ in \eqref{eq:mix_exp} with 
\[
(w_1,w_2,w_3)^\top=(1/2,1/4,1/4)^\top, \hskip 0.2truein (a_1,a_2,a_3)^\top=(1/5,2,2)^\top.
\]
The true Fr\'echet mean is non-sticky and located at 9/4 on leg 1. Under the asymptotic $\chi^2_1$ distribution Type I error probabilities are computed from 500 Monte Carlo runs with $\bm z=9/4$. Table \ref{table1} records the rejection probabilities of the Wilks'-based test for different sample sizes $n$ for significance level $\alpha=0.05$ and the corresponding bootstrap calibrated Type I error probabilities with bootstrap sample size $N=500$ within each Monte Carlo run. We observe that for small sample sizes the bootstrap calibrated version of the Wilks' statistic has lower Type I error probability when compared to the test based on the $\chi^2$ percentile, but their performances are quite similar for larger sample sizes. 

When computing the Type II error probabilities, we considered a mean close to that under $H_0$ (10/4) and another farther (13/4) with the same weights $(w_1,w_2,w_3)^\top=(1/2,1/4,1/4)^\top$  but parameters $(2/11, 2, 2)^\top$ and $(2/14, 2, 2)^\top$, respectively, for the parameters of the exponential distributions of the mixture. We see in Table \ref{table1} that for relatively small sample sizes (e.g., 10,20) since the probability of Type I error is large, the corresponding Type II error probability is also relatively large; this phenomenon decreases with increasing sampling size and increasing distance between $\bm z_1$  and $\bm z$. 
\begin{table}[!htb]
\begin{center}
{\small
\begin{tabular}{c|c|c|c}
Error&Sample size $n$ & $\chi^2$ & Bootstrap\\
\hline
\multirow{4}{*}{\parbox{2cm}{Type I \\ $\bm z=9/4$ \\ on leg 1}}	&10 & 0.1530 (0.1138)&0.0695 (0.0804)\\
	&20 & 0.1005 (0.0672)&0.0690 (0.0567)\\
	&50 & 0.0595 (0.0335) & 0.0545 (0.0321)\\
	&200 & 0.0670 (0.0177) & 0.0630 (0.0172)\\
	\hline

	\multirow{4}{*}{\parbox{2cm}{Type II \\ $\bm z_1=10/4$ \\ on leg 1}}	&10 & 0.842 (0.115)&1.000 (0.000)\\
	&20 & 0.880 (0.073)&1.000 (0.000)\\
	&50 & 0.914 (0.040) &1.000 (0.000)\\
	&200 & 0.892 (0.022)& 1.000 (0.000)\\
	\hline
	\multirow{4}{*}{\parbox{2cm}{Type II \\ $\bm z_1=13/4$ \\ on leg 1}}	&10 & 0.830 (0.119)&1.000 (0.000)\\
	&20 & 0.828 (0.084)&1.000 (0.000)\\
	&50 & 0.742 (0.062) & 0.984 (0.018)\\
	&200 & 0.244 (0.030) & 0.308 (0.033)\\
\end{tabular}
\caption{Type I and II errors (for significance level $\alpha=0.05$) from 500 Monte Carlo runs of the Wilk's test in Theorem \ref{thm3} with asymptotic distribution $\chi^2_1$, and the corresponding boostrap calibrated test with bootstrap sample size $N=500$ in the non-sticky case. }
\label{table1}
}
\end{center}
\end{table}

\subsection{Phylogenetic data}
The BHV treespace proposed by \cite{BHV} as a geometric setting for studying phylogenetic trees on a fixed number of leaves admits an open book decomposition. Thus the space of phylogenetic trees on three leaves with at most one internal edge can be identified with the 3-spider $\M$. We consider the metazoan data in \cite{TN}, originally analysed by \cite{KHK}. 
\vskip 6pt
We extract 3-leaf phylogenetic trees corresponding to the species ``Calb", ``Scas" and ``Sklu" since they appear together in 99 out of the 106 trees in the dataset. The three legs of the spider represent the three possible tree topologies as shown in Figure \ref{fig2}. A point $\x=(x,k)$ on the 3-spider represents a phylogenetic tree with topology indexed by leg $k$ with internal branch length $x>0$. Shrinking $x$ to 0 results in a tree with no internal edge; all such trees reside on the spine $\{0\}$. We suppose that the first leg $L_1$ represents the first topology in Figure \ref{fig2}, $L_2$ the middle topology, and $L_3$ the last. 
\begin{figure}[!htb]
\centering
\includegraphics[scale=0.3]{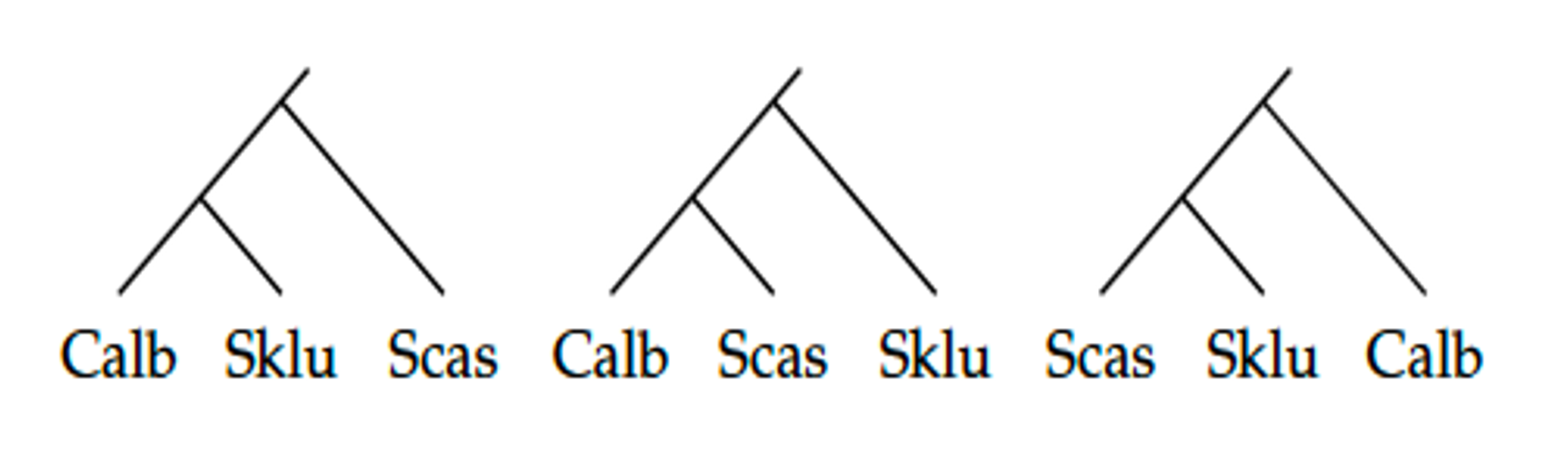}
\caption{Three possible topologies for data extracted from metazoan data \citep{TN}.}
\label{fig2}
\end{figure}

We consider constructing confidence sets for the population Fr\'echet mean phylogenetic tree using Wilks' result. Discontinuity of $\log \mathcal R_n(\x)$ at the spine $\{0\}$ ensures that for a fixed confidence level $\alpha$ there are several choices for such a construction, since there are three copies of the same space $\mathbb R_{>0}$.  
\vskip 6pt
Figure \ref{fig3} illustrates the potential non-standard nature of confidence set $\{\x \in \M: -2\log \mathcal R_n(\x) \leqslant c_1(1-\alpha)\}$ for the Fr\'{e}chet mean using Theorem \ref{thm3} even in the non-sticky case, where $c_1(1-\alpha)$ is the $100(1-\alpha)$th percentile of the $\chi^2_1$ distribution. The three plots are of $\log \mathcal R_n(\x)$ computed using the folding maps when $\x$ lies in interior of the three legs, away from the spine. We note that the maximum of zero is attained on leg $L_1$; in other words, the EL estimator possesses the topology of the left most tree in Figure \ref{fig2} with internal branch length $\approx 0.08$. 

The red, blue and green lines, respectively, are values of $-c_1((1-\alpha)/2)$ at $\alpha=0.1,0.05, 0.01$. The purple, brown and orange lines are the values of $\log \mathcal R_x(0)$ at the spine under the folding maps $F_1, F_2$ and $F_3$, respectively; these values correspond to percentiles  $-c_1((1-\alpha)/2)$ at $\alpha \approx 0.0001, 0, 0$, respectively. 

\begin{figure}[!htb]
\centering
\includegraphics[scale=0.4]{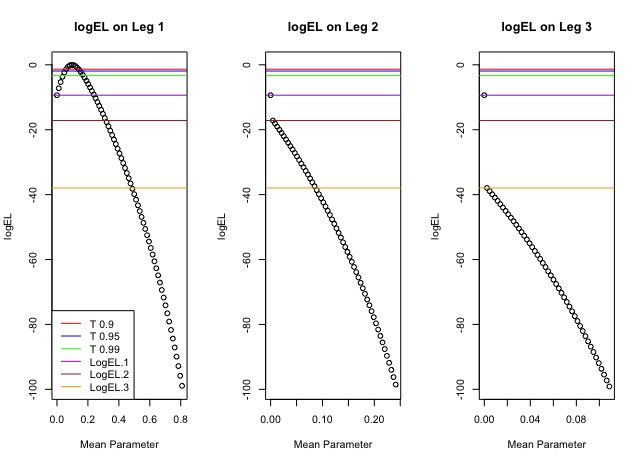}
\caption{$\log \mathcal R_n(\x)$ for metazoan data. When $\x$ is on leg 1 Red, blue and green lines are values of $-c_1((1-\alpha)/2)$ at $\alpha=0.1,0.05, 0.001$, respectively. The purple, brown and orange lines are $\alpha=0.99998,1,1$, respectively.}
\label{fig3}
\end{figure}

From the behaviour of $\log \mathcal R_n(\x)$ in Figure \ref{fig3}, we first present a method to construct confidence sets for the non-sticky mean. With $\alpha_1=0.0001, \alpha_2 \approx 0, \alpha_3 \approx 0$, depending on where the chosen confidence level $\alpha$ lies relative to $\alpha_1,\alpha_2,\alpha_3$, topology of the confidence set can change. 
\begin{enumerate}[leftmargin=*]
\itemsep 0em
\item[(i)] when $\alpha \leqslant \alpha_1$ the confidence set does not include the spine and is a standard interval; 
\item[(ii)] when $\alpha_1 < \alpha \leqslant \alpha_2$, one end point of the confidence set is the spine; 
\item[(iii)] when $\alpha_2< \alpha \leqslant \alpha_3$ the confidence set splits into two parts, with a segment on leg $L_1$ and another on leg $L_2$; 
\item[(iv)] when $\alpha>\alpha_3$ the confidence set comprizes three parts, one corresponding to each leg. 
\end{enumerate}
Figure \ref{fig4} illustrates this phenomenon, where we note the non-standard nature of the confidence sets for cases (ii)-(iv). 
\begin{figure}[!htb]
\centering
\includegraphics[scale=0.4]{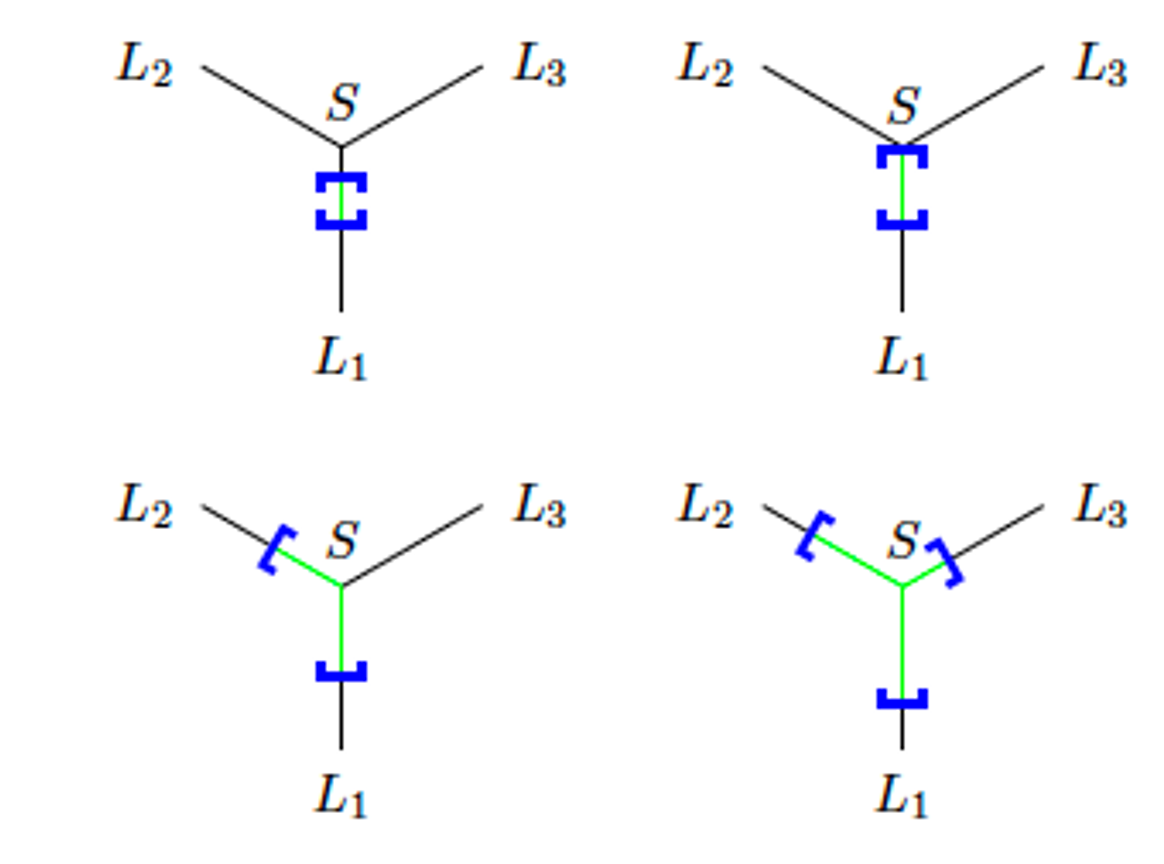}
\caption{Confidence sets for the non-sticky Fr\'{e}chet mean tree of the metazoan data depending on confidence level $\alpha$. Clockwise: (i) when  $\alpha \leqslant \alpha_1$; (ii) when $\alpha_1 < \alpha \leqslant \alpha_2$; (iii) when when $\alpha_2< \alpha \leqslant \alpha_3$; (iv) when $\alpha>\alpha_3$.}
\label{fig4}
\end{figure}



\section*{Acknowledgements}
	KB acknowledges support from grants the Engineering and Physical Sciences Research Council (EPSRC), National Science Foundation (NSF), and the National Institutes of Health (NIH). ATAW acknowledges support from Australian Research Council grant Discovery Project DP220102232.

\begin{appendix}
\section{Definitions and properties}
\label{sec:defs}
\subsection{CAT(0) spaces}
\label{sec:app_cat}
Let $(M,d)$ be a metric space. A path in $M$ is a continuous map $\gamma:[0,1] \to M$ whose length  $L(\gamma)$ is defined by
\[
L(\gamma):=\sup \sum_{i=0}^{n-1}d(\gamma(t_i),\gamma(t_{i-1})),
\]
where the supremum is taken over all $n \geq 1$ and all $0\leq t_0 \leq \cdots\leq t_n\leq1$. Given $x,y \in M$ a path $\gamma:[0,1] \to M$ connects $x$ and $y$ if $\gamma(0)=x$ and $\gamma(1)=y$. $M$ is a \emph{length space} if for all $x,y \in M$, 
\begin{equation}
	\label{geodesic}
	d(x,y)=\inf_{\gamma}L(\gamma),
\end{equation}
where the infimum is taken over all paths $\gamma$ that connect $x$ and $y$.  A length space is said to be a \emph{geodesic space} if for all $x,y \in M$, the infimum in \eqref{geodesic} is attained. 

The notion of curvature of a geodesic metric space $M$ is defined through a comparison with model spaces $C(\kappa)$ with constant curvature $\kappa \in \mathbb R$. The model spaces $C(\kappa)$ are the 2-sphere for $\kappa>0$, the plane $\mathbb R^2$ for $\kappa=0$, and the two-dimensional hyperbolic space for $\kappa <0$. A curvature value is ascribed to $M$ by comparing a geodesic triangle in $M$ with a triangles in $C(\kappa)$. A geodesic triangle $\Delta(a,b,c)$ in $M$ consists of three points $a,b,c$ and geodesic segments connecting them.  Consider a triangle on the plane $C(0)$ made from points $a',b'c'$ with lengths of sides matching the corresponding ones made by $a,b,c$ in $\Delta(a,b,c)$. The geodesic metric space $M$ is said to be a CAT(0) space if every triangle $\Delta(a,b,c)$ is `thinner' than its counterpart in $C(0)$ in the sense that
\[
d(x,a) \leq \|x'-a'\|
\]
for any $x$ on the geodesic segment between $b$ and $c$, where $x'$ is the unique point on the edge $b'c'$ such that $d(b,x)=\|b'-x'\|$ and $d(x,c)=\|x'-c'\|$. A geodesic metric space $M$ is said to have global nonpositive curvature if it is CAT(0). Two important properties of CAT(0) spaces are the following. 
\begin{proposition}
	Let $(M,d)$ be a CAT(0) space. 
	\begin{enumerate}
		\item There is a unique geodesic between every pair of points \citep[Proposition II 1.4][]{BH}. 
		\item Let $\mathcal P$ be the set of all probability measures on $M$ satisfying 
        \[
        \int_M d^2(x,y)p(\d y) <\infty, \quad p \in \mathcal P.          
        \]
        The Fr\'echet means of probability measures in $\mathcal P$ are unique \citep[Proposition 4.3][]{KTT}. 
	\end{enumerate}
\end{proposition}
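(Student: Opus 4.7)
The plan is to derive both uniqueness statements as consequences of the quadratic form of the CAT(0) comparison inequality, which encodes a convexity property of the squared distance along geodesics.

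For part (1), I would argue by uniqueness of midpoints followed by a dyadic density argument. Let $\gamma_1,\gamma_2:[0,1]\to M$ be two geodesics from $x$ to $y$ with common length $L=d(x,y)$, and set $m_i=\gamma_i(1/2)$. Apply the CAT(0) comparison to the geodesic triangle $\Delta(x,y,m_2)$, whose side lengths are $L$, $L/2$ and $L/2$. The Euclidean comparison triangle $\Delta(x',y',m_2')$ is degenerate: $x',m_2',y'$ are collinear with $m_2'$ the midpoint of the segment $x'y'$. The comparison point on side $x'y'$ for $m_1$, which lies on $xy$ at parameter $1/2$, therefore equals $m_2'$, and the CAT(0) inequality yields $d(m_1,m_2)\leq \|m_2'-m_2'\|=0$. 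Hence $m_1=m_2$. Iterating on each half and invoking continuity along the dyadic rationals in $[0,1]$ gives $\gamma_1\equiv\gamma_2$.

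For part (2), the strategy is to prove that $F_p(x):=\int_M d^2(x,y)\,p(\d y)$ is strictly convex along geodesics in a quantitative form strong enough to rule out multiple minimizers. The engine is the quadratic CAT(0) inequality
\[
d^2(\gamma(t),z)\leq (1-t)\,d^2(x,z)+t\,d^2(y,z)-t(1-t)\,d^2(x,y),
\]
valid for every geodesic $\gamma:[0,1]\to M$ with $\gamma(0)=x$, $\gamma(1)=y$, every $z\in M$ and every $t\in[0,1]$. This is obtained by applying the first-order CAT(0) comparison to the triangle $\Delta(x,y,z)$ and then using the identity $\|(1-t)x'+ty'-z'\|^2=(1-t)\|x'-z'\|^2+t\|y'-z'\|^2-t(1-t)\|x'-y'\|^2$ in the Euclidean comparison triangle. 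Integrating against $p\in\mathcal P$, which is justified by the integrability condition defining $\mathcal P$ and Fubini, gives
\[
F_p(\gamma(t))\leq (1-t)\,F_p(x)+t\,F_p(y)-t(1-t)\,d^2(x,y).
\]
If $x_0$ and $x_0'$ were two distinct Fr\'echet means with common minimum $F^{\ast}$, part (1) supplies a unique geodesic joining them, and evaluating the displayed inequality at its midpoint $m$ yields $F_p(m)\leq F^{\ast}-\tfrac14 d^2(x_0,x_0')<F^{\ast}$, contradicting minimality. Hence the Fr\'echet mean is unique whenever it exists.

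The main technical step, and the only one not immediate from the definition of CAT(0) reproduced in Appendix \ref{sec:app_cat}, is the upgrade from the first-order comparison (a distance inequality) to the quadratic inequality displayed above; once that is available, both parts follow by short arguments. A minor bookkeeping point is that the quadratic inequality applied to a minimizing sequence for $F_p$ also shows it is Cauchy, so under any completeness assumption existence of a Fr\'echet mean is automatic and the "two minimizers" scenario addressed above is meaningful; if completeness is not assumed, the claim is simply about uniqueness conditional on existence.
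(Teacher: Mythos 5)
Your proposal is correct, and it reproduces the standard arguments behind the two results that the paper itself does not prove but simply cites: the midpoint-uniqueness plus dyadic-density argument is exactly the proof of unique geodesics in \citet[Proposition II 1.4]{BH}, and the quadratic comparison inequality $d^2(\gamma(t),z)\leq(1-t)d^2(x,z)+t\,d^2(y,z)-t(1-t)d^2(x,y)$, integrated against $p$ and evaluated at the midpoint of a geodesic joining two putative minimizers, is precisely the variance/strict-convexity argument used for uniqueness of Fr\'echet means in CAT(0) (NPC) spaces as in the cited Proposition 4.3 of \citet{KTT}. The only cosmetic remarks: in part (1) you should note explicitly that the side $[x,y]$ of the comparison triangle is taken to be $\gamma_1$ (so that $m_1$ is genuinely a point on a side, as required by the vertex-to-side comparison stated in Appendix \ref{sec:app_cat}), and in part (2) Fubini is not needed, only monotonicity of the integral; neither affects correctness.
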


\subsection{Definitions related to Theorem \ref{thm:bootstrap}}
\label{sec: Def5}
Here we define the real-valued function $\x \mapsto \tau(\x)$ in a neighbourhood of the Fr\'echet mean $\x_0 \in \M$ used in the statement of Theorem \ref{thm:bootstrap}.  First, we recall some notation, slightly modified for present purposes,  from \cite{DHR91}. Denote by $X^j(\x)$ the $j$th element of $X(\x)$. Define
\[
\alpha^{m_1 \cdots m_k}(x)=E[X_1^{m_1}(x) \cdots X_1^{m_k}(x)], 
\]
\[
A^{m_1 \cdots m_k}(x) = n^{-1} \sum_{i=1}^n \left \{ X_i^{m_1}(x) \cdots X_i^{m_k}(x)- \alpha^{m_1 \cdots m_k}(x)  \right \},
\]
\[
\bar{\alpha}^{j_1 \cdots j_k}(x) = \alpha_\ast^{j_1 m_1}(x) \cdots \alpha_\ast^{j_k m_k}(x) \alpha^{m_1 \cdots m_k}(x),
\]
\[
\bar{A}^{j_1 \cdots j_k}(x) = \alpha_\ast^{j_1 m_1}(x) \cdots \alpha_\ast^{j_k m_k}(x) A^{m_1 \cdots m_k}(x),
\]
where a summation convention is used above (a repeated superscript implies summation over that superscript) and $\alpha_\ast^{jk}(x)$ is the inverse of $\alpha^{k\ell}(x)$, i.e.
\[
\alpha_\ast^{jk}(x) \alpha^{k \ell}(x)= \delta^{j\ell}=\begin{cases} 1 &j=\ell \\ 0 & j \neq \ell.  \end{cases}
\]
The function $\tau: M \to \mathbb R$ is defined as
\begin{equation}
\x \mapsto	\tau(\x)=\frac{5}{3} \bar{\alpha}^{jk\ell}(\x)\bar{\alpha}^{jk\ell}(x)-2\bar{\alpha}^{jjk}(\x)\bar{\alpha}^{k \ell \ell}(\x) +\frac{1}{2} \bar{\alpha}^{j j k k}(x).
\label{eq:tau}
\end{equation}

\section{Proofs}
\label{sec:proofs}
\subsection{Proof of Theorem 2.2}
We begin with what is surely a well-known result (stated in slightly more elaborate form  than is needed here).  We state and prove it here as we do not know of a convenient reference.  Denote by $\chi_m^2(a)$ the noncentral chi-squared with $m$ degrees of freedom and noncentrality parameter $a$, with distribution function
\begin{equation}
F_{m,a}(y) = \sum_{k=0}^\infty e^{-a/2}\frac{(a/2)^k}{k!}
F_{m+2k, 0}(y).
\label{mixture_representation}
\end{equation}

\begin{lemma}
The noncentral chi-squared is stochastically monotone without bound in both $m$ and $a$ in the sense that, for all $y\geq 0$, $0 \leq a \leq b$ and $0 \leq m \leq n$,
\begin{equation}
F_{m,a}(y) \geq F_{n,b}(y);
\label{chi_1}
\end{equation}
for any fixed $a \geq 0$,
\begin{equation}
\lim_{m \rightarrow \infty} F_{m,a}(y)=0;
\label{chi_2}
\end{equation}
and, for all $m \geq 0$,
\begin{equation}
\lim_{a \rightarrow \infty} F_{m,a}(y) = 0.
\label{chi_3}
\end{equation}
\end{lemma}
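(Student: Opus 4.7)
The plan is to exploit the Poisson mixture representation \eqref{mixture_representation} to reduce everything to two classical ingredients: stochastic monotonicity of the central chi-squared in its degrees of freedom, and stochastic monotonicity of the Poisson family in its mean. I would handle \eqref{chi_1} first, then dispatch \eqref{chi_2} and \eqref{chi_3} together with a single second-moment argument.

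For \eqref{chi_1}, I would split the joint monotonicity into two one-parameter statements and compose them. Monotonicity in $m$ with $a$ fixed is term by term in the mixture: for each $k$, $F_{m+2k,0}(y) \geq F_{n+2k,0}(y)$ whenever $m \leq n$, which is immediate from the pathwise coupling $\chi^2_n = \sum_{i=1}^n Z_i^2 \geq \sum_{i=1}^m Z_i^2 = \chi^2_m$ for i.i.d.\ standard normals $Z_i$. Monotonicity in $a$ with $m$ fixed uses the same pathwise observation to see that $k \mapsto F_{m+2k,0}(y)$ is nonincreasing in $k$; combined with the fact that $K \sim \text{Poisson}(a/2)$ is stochastically increasing in $a$, this forces $\mathbb{E}[F_{m+2K,0}(y)]$ to be nonincreasing in $a$. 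Composing gives $F_{m,a}(y) \geq F_{m,b}(y) \geq F_{n,b}(y)$ for $0 \leq m \leq n$ and $0 \leq a \leq b$.

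Both limit claims reduce to showing $\Prob[\chi^2_m(a) \leq y] \to 0$ whenever $m + a \to \infty$. Since $\mathbb{E}[\chi^2_m(a)] = m + a$ and $\Var(\chi^2_m(a)) = 2m + 4a$, Chebyshev's inequality, applied once $m+a > y$, gives $\Prob[\chi^2_m(a) \leq y] \leq (2m+4a)/(m+a-y)^2$, which tends to zero both as $m \to \infty$ with $a$ fixed (yielding \eqref{chi_2}) and as $a \to \infty$ with $m$ fixed (yielding \eqref{chi_3}).

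I do not anticipate any substantive obstacle here: every ingredient is classical. The only point requiring care is the direction of inequality in the coupling step — one must match the nonincreasing integrand $k \mapsto F_{m+2k,0}(y)$ with the stochastically increasing parameter $K \sim \text{Poisson}(a/2)$ to obtain the correct sign in the monotonicity in $a$.
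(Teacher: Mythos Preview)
Your argument is correct. Both halves of \eqref{chi_1} are proved cleanly --- the term-by-term comparison for $m$ and the Poisson-monotonicity argument for $a$ are standard and sound --- and the Chebyshev bound dispatches \eqref{chi_2} and \eqref{chi_3} simultaneously with an explicit rate.

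The paper takes a somewhat different route. Rather than splitting \eqref{chi_1} into two one-parameter statements and invoking the mixture representation, it uses the additive property $\chi_m^2(a)+\chi_h^2(c)\stackrel{d}{=}\chi_{m+h}^2(a+c)$ (verified via the moment generating function) to obtain joint monotonicity in $(m,a)$ from a single coupling; this same additive property also yields \eqref{chi_2} directly. The paper then appeals to the Poisson mixture and Poisson stochastic monotonicity only for \eqref{chi_3}. Your approach avoids the additive property entirely, trading it for the Poisson-mixture argument in \eqref{chi_1} and a second-moment bound for the limits. The paper's version is marginally more unified for \eqref{chi_1}, while your Chebyshev step is more elementary and gives a quantitative rate; both rely implicitly on integer degrees of freedom for the coupling or additive representation.
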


\begin{proof}
Result (\ref{chi_1}) follows directly from the fact that, if $m$, $a$, $h$ and $c$ are all non-negative and $\chi_m^2(a)$ and $\chi_h^2(c)$ are independent,  then $\chi_m^2(a)+\chi_h^2(c) = \chi_{m+h}^2(a+c)$ in distribution.  This result is easily proved using the moment generating function of the noncentral chi-squared.  Result (\ref{chi_2}) follows from the additive property of the noncentral chi-squared mentioned above while result (\ref{chi_3}) follows from the Poisson mixture representation (\ref{mixture_representation}) and the fact that the Poisson distribution is stochastically increasing without bound in its mean parameter.
\end{proof}

We now prove Theorem 2.

\begin{proof}
Consider the EL ratio $\mathcal{R}_n(x)$ for a multivariate Euclidean mean based on a sample $x_1, \ldots , x_n$.  It is well-known that $\log \left (\mathcal{R}_n(x) \right  )$ is a concave function of $x$ \citep[e.g.][Theorem 3.2]{CharlesThesis}.
 It turns out that, under the assumptions of the Theorem, for any $a \in \mathbb{R}^p$,
\begin{equation}
-\log \left ( \mathcal{R}_n(x_0+n^{-1/2} a) \right ) \rightarrow \chi_p^2\left ( a^\top \Sigma_0^{-1}a  \right)
\label{local_alternative_result}
\end{equation}
in distribution, under the assumption that the covariance matrix $\Sigma_0$ of $x_1$ has full rank $p$.  This result for local alternatives is mentioned by \citep[eq. (2.7)]{ABO} in the univariate case; a  detailed proof in the multivariate case under the conditions of the Theorem is given in \citep[Theorem 3.1][]{CharlesThesis}. As a by-product of the proof of the local alternative result, it is proved that $\log (\mathcal{R}_n(x_0+n^{-1/2} v))$ is asymptotically locally quadratic in the sense that
\begin{equation}
\log \left ( \mathcal{R}_n (x_0+n^{-1/2} a)     \right )
= - (z_n - a)^\top \Sigma_0^{-1} (z_n -a) + \delta_n(a),
\label{locally_quadratic}
\end{equation}
where $z_n = n^{1/2}(\bar{x}_n-x_0)$ and, for any fixed $\rho>0$, 
\[
\sup_{v \in B_\rho(0)} \vert \delta_n(v) \vert = o_p(1),
\]
with $B_\rho(0) \subset \mathbb{R}^p$ denoting the ball of radius $\rho$ centred at the zero vector.  A further important point is that, due to $\log (\mathcal{R}_n(x))$ being a concave function with  stationary maximum of 0 at $\bar{x}_n$, the sample mean of the $x_i$, for any $v \in \mathbb{R}^p$ and any $0 \leq t \leq s <\infty$, we have
\begin{equation}
\log \left \{ \mathcal{R}_n \left (\bar{x}_n+t (v -\bar{x}_n) \right ) \right \} \geq \log \left \{ \mathcal{R}_n \left (\bar{x}_n+ s (v-\bar{x_n})   \right ) \right \}
\label{monotonic}
\end{equation}
i.e. $\log ( \mathcal{R}_n(x))$ is a monotonic non-increasing function of $t \geq 0$.  Moreover, at $t=0$, (\ref{monotonic}) is zero and at $t=1$ (\ref{monotonic}) is equal to $\log ( \mathcal{R}_n(v))$.

Our goal is to prove that for any fixed simple alternative $x \in \mathbb{R}^p$ with $x \neq x_0$, $\log (\mathcal{R}_n(x)) \rightarrow -\infty$ in distribution.  We now present the key steps in the argument.

Fix $\epsilon >0$ to be arbitrarily small and fix  $A \in (0,\infty)$ to be arbitrarily large.  First, using (\ref{chi_3}) and (\ref{local_alternative_result}), we choose $a=a(A,\epsilon)$ to have norm $\vert \vert a \vert \vert$ sufficiently large so that
\begin{equation}
\lim_{n \rightarrow \infty} \pr\left [ -\log \left ( \mathcal{R}_n(x_0+n^{-1/2} a) \right ) < A\right  ]=\pr\left [\chi_p^2 \left (  a^\top \Sigma_0^{-1} a \right )<A \right ] <\epsilon/2.
\label{first_of_three}
\end{equation}

Next, using the law of large numbers and recalling the definition $z_n=n^{1/2}(\bar{x}_n-x_0)$, choose $\xi=\xi(\epsilon)>0$ so large  that 
\begin{equation}
\lim_{n \rightarrow \infty} \pr\left [ z_n \notin B_\xi(0)  \right] < \epsilon/2.
\label{second_of_three}
\end{equation}

Third, fix any $\alpha>1$ and define
\begin{equation}
t_0 (z)=\sqrt{\frac{(z-a)^\top \Sigma_0^{-1} (z-a)}{(x-x_0)^\top \Sigma_0^{-1} (x - x_0)}}.
\label{def_t_0}
\end{equation}
Note that, when $a$ and $\Sigma_0$ are fixed and $\Sigma_0$ has full rank $p$, 
\begin{equation}
0 \leq \sup_{z \in B_\xi (0)}\vert t_0(z) \vert \leq t_0^\ast <\infty.
\label{t_star}
\end{equation}
We claim that on the event $\{z_n \in B_\xi(0)\}$, and for $n=n(a, \alpha, \xi)$ sufficiently large, 
\begin{equation}
\log \left ( \mathcal{R}_n(x_0+n^{-1/2}a) \right )
\geq \log \left \{\mathcal{R}_n \left (\bar{x}_n +n^{-1/2} \alpha t_0(z_n) (x -\bar{x}_n ) \right )  \right \}
\label{third_of_three}
\end{equation}
  To justify (\ref{third_of_three}), we first use the locally quadratic representation (\ref{locally_quadratic}) to write
\begin{align}
\log \left \{  \mathcal{R}_n \left ( \bar x_n +n^{-1/2} t (x -\bar x_n)  \right )   \right \} 
&=\log \left \{  \mathcal{R}_n \left ( x_0 +n^{-1/2} z_n +n^{-1/2} t (x-\bar x_n)  \right )   \right \} \nonumber \\
&=\left ( z_n - z_n -t(x-\bar x_n)    \right )^\top \Sigma_0^{-1} \left ( z_n - z_n -t(x-\bar x_n) \right )
\nonumber \\
& \hskip 1.5truein + \delta_n\left ( v(z_n,,t) \right )
\nonumber \\
&=t^2 (x_1-\bar x_n)^\top \Sigma_0^{-1}(x - \bar x_n)
+  \delta_n\left ( v(z_n,,t) \right ),
\label{one_other_thing}
\end{align}
where 
\[
v(z_n,t)=z_n+t(x -\bar x_n) = (1-t) \bar x_n+ t y.
\]
An important point is that, with $t_0^\ast$ defined in (\ref{t_star}),  
\begin{equation}
\max \left \{ \vert \delta_n(a) \vert, \sup_{z \in B_\xi (0)}     \sup_{t \in [0,t_0^\ast]}\vert \delta_n \left \{ v(z, \alpha t \right ) \vert  \right \} = o_p(1),
\label{uniform_zero}
\end{equation}
because this tells us that the collection of remainder term $\delta_n \{v(z_n,\alpha t)$, in the locally quadratic expansions (\ref{locally_quadratic}) that we need to consider,  are going to zero uniformly in probability.  For this reason we do not need to discuss these remainder terms explicitly below, as they play a negligible role.
Consequently, starting with with (\ref{one_other_thing}) with $t=\alpha t_0(z_n)$ and for  $n$ sufficiently large, 
\begin{align*}
\alpha^2  & \left ( \frac{(z_n-a)^\top \Sigma_0^{-1} (z_n-a)}
{(x - x_0)^\top \Sigma_0^{-1} (x - x_0)}  \right )
(x-\bar x_n)^\top \Sigma_0^{-1}(x - \bar x_n)\\
&\geq \alpha^2  \left (\frac{(y-\bar x_n)^\top \Sigma_0^{-1}(x - \bar x_n)}{(x - x_0)^\top \Sigma_0^{-1} (x - x_0)} \right ) (z_n-a)^\top \Sigma_0^{-1} (z_n-a)\\
&\geq (z_n-a)^\top \Sigma_0^{-1} (z_n-a)
\end{align*}
because $\alpha>1$ and, on the set $\{z_n \in B_\xi(0)\}$ and for $n$ sufficiently large, $\bar x_n = x_0+n^{-1/2}z_n=x_0+ O(n^{-1/2})$.  Therefore, bearing in mind the minus sign in (\ref{locally_quadratic}), the monotonicity in (\ref{monotonic})
and the uniform convergence to zero in distribution indicated in (\ref{uniform_zero}),
\begin{align}
-\log \left \{ \mathcal{R}_n \left ( x_0 + n^{-1/2} a \right )   \right \}
& \leq -\log \left \{ \mathcal{R}_n \left (\bar x_n +n^{-1/2} \alpha t_0(z_n) (x-\bar x_n) \right )\right \} \nonumber \\
& \leq - \log \left \{ \mathcal{R}_n(\bar x_n+x -\bar x_n) \right \} \nonumber \\
& \leq - \log \left \{ \mathcal{R}_n(x) \right \},
\label{final_equality}
\end{align}
assuming only that $n$ is sufficiently large for
$n^{-1/2} \alpha t_0^\ast \leq 1$ to hold.
Hence $\mathcal{R}_n(x) \rightarrow 0$ in distribution as $n \rightarrow \infty$, due to  (\ref{first_of_three}) and (\ref{second_of_three}), and because $\epsilon$ and $A$ are arbitrary.
\end{proof}

\subsection{Proof of Proposition \ref{prop:ELbook_spine}}	
The proof uses the following result.
\begin{lemma}
\label{lem:concave}
The function $\x \mapsto \log(\mathcal R_n(\x))$ is concave on $M$.
\end{lemma}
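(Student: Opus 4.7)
My approach will be to recognise $\log\mathcal R_n(\x)$ as the value function of a concave maximisation problem whose constraints depend affinely on $\x$ within each stratum of $M$, and then to bridge the two strata using the Lagrangian dual of the spine problem.

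First I would note that on the interior $L_k^o$ of any page, (\ref{eqn1d}) gives $\log\mathcal R_n((x,k)) = h_k(x)$, where $h_k:\mathbb R^p\to\mathbb R\cup\{-\infty\}$ is the Euclidean EL function computed from the folded sample $F_k(\x_1),\ldots,F_k(\x_n)$. Since the objective $\sum_i\log(np_i)$ is concave in $p$ and the only $x$-dependent constraint $\sum_i p_i F_k(\x_i)=x$ is jointly affine in $(p,x)$, standard value-function theory yields that $h_k$ is concave on $\mathbb R^p$, hence on $L_k^o$. The same reasoning applied to (\ref{eqn1e}) delivers concavity of $\log\mathcal R_n$ on the spine $S\cong\mathbb R^{p-1}$: only the equality $\sum_i p_i P_s(\x_i)=x^{(1)}$ depends on the parameter, and does so affinely, while the inequalities $\sum_i p_i\langle F_j(\x_i),e_j\rangle\le 0$ are $\x$-independent.

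For geodesic concavity across the spine, I would consider a geodesic $\gamma:[0,1]\to M$ from $\x\in L_j^o$ to $\y\in L_k^o$ (with $j\ne k$) meeting the spine at $\gamma(t^\ast)=\z$, and set $f(t):=\log\mathcal R_n(\gamma(t))$. By the previous step $f$ is concave on $[0,t^\ast)$ and on $(t^\ast,1]$, since on each sub-interval it is the restriction of a concave Euclidean function $h_j$ or $h_k$ to an affine path in $\mathbb R^p$. Because (\ref{constrained_plus_one}) is obtained from (\ref{eqn1e}) by tightening a single inequality to an equality, $\log\mathcal R_n(\z)\ge\log\mathcal R_n^{[m]}(\z)=h_m(0,z^{(1)})$ for every $m$, giving $f(t^\ast)\ge\max\{f(t^{\ast-}),f(t^{\ast+})\}$. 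The chord inequality $f(t^\ast)\ge(1-\lambda)f(t_1)+\lambda f(t_2)$ for $t_1<t^\ast<t_2$ would then follow by writing $\log\mathcal R_n$ as a pointwise infimum of affine functions: the spine dual expresses $\log\mathcal R_n(x^{(1)})=\inf_{\alpha,\beta^{(1)},\gamma_m\ge 0}L(\alpha,\beta^{(1)},\gamma;x^{(1)})$ as an infimum of functions affine in $x^{(1)}$, and each $h_k(x)=\inf_{\alpha,\beta}G_k(\alpha,\beta;x)$ admits the analogous representation with no sign constraint on the page-normal multiplier. Restricting the spine dual to $\gamma_m=0$ for $m\ne k$ identifies it with the page-$k$ dual restricted to $\gamma_k\ge 0$; combining the page-$j$ and page-$k$ duals across $t^\ast$ produces a single parametric family of affine upper envelopes of $f$ on $[0,1]$ whose pointwise infimum coincides with $f$, yielding concavity.

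The hard part will be the dual matching across the spine: I must verify that the non-negativity constraint $\gamma_k\ge 0$ in the spine dual correctly encodes the ``outward'' direction into page $k$, so that the spine and page-$k$ duals produce consistent affine envelopes of $f$ along a geodesic that crosses the spine. This will rely on the isometric gluing in the open-book construction, which guarantees that a geodesic through the spine admits compatible affine parameterisations in the $F_j$- and $F_k$-coordinate systems on either side.
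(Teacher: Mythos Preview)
Your within-stratum argument --- concavity of $\log\mathcal R_n$ on each page interior via the Euclidean EL for the $F_k$-folded data, and on the spine via the EL for the $P_s$-projected data --- is exactly what the paper's proof does. The paper stops there; the geodesic-concavity claim is deferred to a remark without argument, and the only place the lemma is invoked (the proof of Proposition~\ref{prop:ELbook_spine}) requires nothing more than concavity on the spine alone.

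Your dual-matching argument for geodesics that cross the spine has a real gap, and in fact the stronger statement you are aiming at is false in general. The page-$k$ affine envelopes $G_k(\alpha,\beta)+\beta^\top x$ majorise $h_k$, but they need not majorise the spine value $\log\mathcal R_n(\z)$: the latter is defined through the \emph{inequality} constraints in (\ref{eqn1e}) and can strictly exceed $h_k(0,z^{(1)})=\log\mathcal R_n^{[k]}(\z)$. Concretely, on the $3$-spider with one observation on each leg at unit distance from the origin, the uniform weights $p_i=1/3$ are feasible for (\ref{eqn3a}) and give $\log\mathcal R_n(0)=0$, whereas each $h_k(0)=\log(27/32)<0$. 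Hence $f=\log\mathcal R_n\circ\gamma$ jumps upward at $t^\ast$, and for any $t_2>t^\ast$ the chord from $(t^\ast,0)$ to $(t_2,f(t_2))$ lies strictly above $f$ immediately to the right of $t^\ast$. No family of affine functions can simultaneously upper-bound $f$ on $[0,1]$ and have pointwise infimum equal to $f(t^\ast)=0$. The upshot is that the lemma should be read as a statement about stratum-wise concavity, which is what the paper's proof delivers and what its application actually needs; your more ambitious programme cannot be completed because its target is not true.
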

\begin{proof}
When $\x=(x,k)$ lies in the interior $L^0_k \cong \mathbb R^p$ of page $k$, note that the folding map $F_k$ is linear, and the constraints in \eqref{eqn1d} thus lead to a convex feasibility set in the optimisation problem. Then, $\log \mathcal R_n|_{L^0_k}$ coincides with the EL function for $p$-dimensional Euclidean random variables $F_k(\x_i)$ which is concave \citep[e.g.][Theorem 3.2]{CharlesThesis}. This is true for any page $k$, and $\log \mathcal R_n$ is thus convex when restricted to $M\backslash S$. When $\x$ lies in $S$, a similar argument can be used for the $(p-1)$-dimensional Euclidean random variables $P_s(\x_i)$ to complete the proof. 
\end{proof}
\begin{remark}
It may be verified that the arguments in the proof above imply that $\log(\mathcal R_n) \circ \lambda:[0,1] \to \mathbb R$ is concave for every geodesic $\lambda:[0,1] \to M$, thereby verifying concavity of $\log \mathcal R_n$ as a function on the metric space $(M,d)$. 
\end{remark}
\begin{proof}
Choose a point $\x=(0, x^{(1)})$ on the spine $S$.  If $x^{(1)}$ does not lie in the convex hull of $P_s(\x_1)), \ldots, P_s(\x_n)$, then the feasible set for optimization problem \eqref{eqn1e} is empty, and $\log \left ( \mathcal{R}_n(\x\right )= - \infty$.  It is assumed that for the remainder of the proof that $x^{(1)}$ lies in the convex hull of $P_s(\x_1), \ldots , P_s(\x_n)$. 
	We first prove part (i). Note that (\ref{unconstrained}) has the same objective function as (\ref{eqn1e}) but the feasible region of (\ref{eqn1e}) is contained within the feasible region of (\ref{unconstrained}).  Hence if the solution to (\ref{unconstrained}), which is unique under the assumption that $x^{(1)}$ lies within the convex hull of $P_s(\x_1), \ldots , P_s(\x_n)$,  is feasible for (\ref{eqn1e}) then it is also the optimum for (\ref{eqn1e}).  Hence part (i) follows.
	
        Now consider part (ii). The feasible region, $\Omega$, for the optimization problem (\ref{eqn1e}) may be partitioned as follows: $\Omega=\bigcup_{k=0}^\ell \Omega_k$,
	where $\Omega_0$ consists of those $\{p_1, \ldots , p_n\}$ resulting in the constraints $\sum_i  p_i \langle F_k(\x_i), e_k \rangle \leqslant 0$, $k=1, \ldots , \ell$, being strict inequality constraints; and for $\Omega_j$, $j=1, \ldots , \ell$, constraint $\sum_{i=1}^n p_i \langle F_j(\x_i), e_j\rangle=0$ is an equality constraint and all the other inequality constraints are strict.  Since at most one of the inequalities in (\ref{eqn1e}) can be violated \citep{HHLMMMNOPS}, the $\left (\Omega_j \right )_{j=0}^\ell$ form a partition of $\Omega$. If the solution to (\ref{eqn1e}) lies in $\Omega_0$, then it must be a stationary maximum, which also must be a stationary maximum for optimization problem (\ref{unconstrained}), and by Lemma \ref{lem:concave} it must be the global maximum of (\ref{unconstrained}).  This corresponds to case (i).  If (\ref{test_case}) fails,  then by concavity of the objective function and convexity of the set $\Omega$, the only possibility is that the maximum for problem (\ref{eqn1e}) lies in the relative boundary of the convex set $\Omega$, i.e. one of the sets $\Omega_1, \ldots , \Omega_\ell$. Hence, in this case, the solution is given by (\ref{max_case}) and  the proof of part (ii) is now complete. 
\end{proof}	

\subsection{Proof of Theorem \ref{thm3}}
\begin{proof}
We first consider part (i).  Suppose that the population Fr\'echet mean is $\x_0=(x_0,k)$ where $x_0=(x_0^{(0)}, x_0^{(1)})$.  Then the relevant EL optimization problem is (\ref{eqn1d}) with $x=x_0$.  The proof follows from Theorem 2.1 for EL for Euclidean random variables $F_k(\x_1), \ldots , F_k(\x_n)$.

To establish part (ii), the key step is establishing that (\ref{test_case}) holds with probability approaching $1$ as $n \rightarrow \infty$, where in optimization problem  (\ref{unconstrained}) to obtain the $p_i^{[0]}$, we have set $x^{(1)}=x_0^{(1)}$, say, where 
\[
x_0^{(1)}= \int_{\M} P_s(\x) \d\mu (\x),
\]
is the population Fr\'echet mean of $P_s(\x)$.  Writing $\psi_0=x_0^{(1)}$ for convenience, we know that
 \[
p_i^{[0]} =\frac{1}{n} \frac{1}{\left (1+\langle \gamma, (P_s(\x_i) - \psi_0) \rangle \right )}
 \]
 where, provided $\psi_0$ lies in the convex hull of $P_s(\x_1, \ldots , P_s(\x_n)$, $\gamma \in \mathbb{R}^{p-1}$ is uniquely determined by
 \[
\sum_{i=1}^n \frac{1}{\left (1+\langle \gamma, (P_s(\x_i) - \psi_0) \rangle \right )}(P_s(\x_i) - \psi_0)=0_{p-1},
 \]
 where $0_{p-1}$ is the $(p-1)$-vector of zeros.  Expanding, assuming $\gamma$ is small, we see that the leading terms in the expansion of $\gamma$ is
 \begin{equation}
\gamma=n^{-1/2}\left [\bar{V}_0^{-1}\left \{n^{1/2} (\bar{\psi}_0-\psi_0) \right \} +r_n \right ],
\label{gamma_expansion}
 \end{equation}
 where the remainder term $r_n$ is of size $\vert \vert r_n \vert \vert = o_p(1)$,
and $\bar{\psi}_0$ is the sample mean of the $P_s(\x_i)$. 
Equation (\ref{gamma_expansion}) can be made rigorous without difficulty; see \cite[p.20]{ABO}.  Under the weak moment assumptions of the theorem, we are not able to make a stronger statement than $\vert \vert r_n \vert \vert = o_p(1)$ without further information.  Using (\ref{gamma_expansion}), the corresponding expansion for $p_i^{[0]}$ is 
\begin{equation}
p_i^{[0]}= n^{-1} \left \{ 1- n^{-1/2}(P_s(\x_i)-\psi_0)^\top \bar{V}_0^{-1} \left \{n^{1/2} (\bar{\psi}_0 - \psi_0) \right \} +o_p(n^{-1/2}) \ \right \},
\label{p_i_expansion}
\end{equation}
where
\[
\bar{V}_0 = n^{-1} \sum_{i=1}^n \left (P_s(\x_i) - \psi_0  \right ) \left ( P_s(\x_i) - \psi_0
\right )^\top.
\]
Consequently, for each $j=1, \ldots , \ell$,
\begin{align}
\sum_{i=1}^n & p_i^{[0]} \langle F_j(\x_i) , e_j \rangle 
=n^{-1}  \sum_{i=1}^n \langle F_j(\x_i) , e_j \rangle \nonumber \\
&-  n^{-1/2}\left \{ n^{-1}\sum_{i=1}^n \langle F_j(\x_i) , e_j \rangle (P_s(\x_i) - \psi_0) \right \}^\top  \bar{V}_0^{-1} \left \{ n^{1/2}(\bar{\psi}_0-\psi_0) \right \} +
o_p(n^{-1/2}).
\label{27B}
\end{align}

Since we are considering the sticky case, there exists an $\epsilon>0$ such that, for all $j=1, \ldots , \ell$,
\begin{equation}
a_j=\int_{\M} \langle F_j(\x), e_j \rangle \d\mu(\x) < -\epsilon <0.
 \label{a_j}
 \end{equation}
it follows from the strong law of large numbers applied to $n^{-1} \sum_{i=1}^n \langle F_j(\x_i), e_j\rangle$ plus the fact that the remainder terms on the RHS of (\ref{27B}) are $O_p(n^{-1/2})$, that
\begin{equation}
\pr \left [ \max_{j=1, \ldots , \ell} n^{-1} \sum_{i=1}^n p_i^{[0]} \langle F_j(\x_i), e_j \rangle  \leqslant - \epsilon/2 \right ] \rightarrow 1,
\label{less_than_epsilon_1}
\end{equation}
as $n \rightarrow \infty$.  Hence the condition (\ref{test_case}) is satisfied for each $j=1, \ldots, \ell$ and therefore the EL statistic $\log (\mathcal{R}_n(\x_0))$ from (\ref{eqn1e}) converges in probability to the statistic $\log(\mathcal{R}_n^{[0]}(\x_0))$ in (\ref{unconstrained}).  Moreover, we know that the latter correspond to the EL formulation for the mean of $(p-1)$-dimensional Euclidean random variables $P_s(\x_1), \ldots , P_s(\x_n)$ and so Theorem 2.1 can be applied.  Hence part (ii) is proved.

Finally, we consider part (iii), the semi-sticky case.  Suppose that $k$ is the unique $j=1, \ldots , \ell$ such that (\ref{a_j}) is zero.  Define $x_0=(0,x_0^{(1)})$.  Note that $x_0$ is the population Euclidean mean in the case of $\mathcal{R}_n^{[k]}(\cdot)$ in (\ref{constrained_plus_one}) but $x_0$ is not the relevant population Euclidean mean in the case of $\mathcal{R}_n^{[j]}(\cdot)$ in (\ref{constrained_plus_one}), $j \neq k$.  Specifically, the Euclidean mean in these latter cases is $(a_j, x_0^{(1)})$ where $a_j <0$ for $j \neq k$; see (\ref{a_j}).  Hence, because $-\log  ( \mathcal{R}_n^{[k]}(x_0)   )$ has an asymptotic $\chi_p^2$ distribution and, by Theorem \ref{thm:alternative}, 
\[
\log \left ( \mathcal{R}_n^{[j]}(x_0)  \right ) \rightarrow - \infty
\]
in probability for each $j \neq k$, we conclude that, with probability approaching $1$, 
\[
\log \left ( \mathcal{R}_n^{[k]}( x_0) \right ) > \max_{j \neq k} \log \left ( \mathcal{R}_n^{[j]}(x_0) \right ).
\]
Consequently, it follows from Proposition \ref{prop:ELbook_spine} that the maximum of the objective function occurs either on page $k$ or on the spine, and in the former case we choose $j=k$ in (\ref{max_case}) with probability approaching 1.  Moreover, it follows from symmetry properties of the normal distribution that (i) it is on page $k$ with probability approaching $1/2$ and on the spine with probability approaching $1/2$; and (ii) the limiting conditional distribution in the former case is $\chi_p^2$ and in the latter case, $\chi_{p-1}^2$.  So part (iii) is now proved.    
\end{proof}

\subsection{Proof of Theorem \ref{thm:bootstrap}}
\begin{proof}
A  sketch of a method of proof, applicable in a wide range of settings including that of Theorem \ref{thm:bootstrap},  is outlined in Appendix B of \cite{FHJW96}.  However, although the proof sketch in their paper provides the key steps and insights in the approach, it does not give precise enough information to identify explicit sufficient conditions for Theorem \ref{thm:bootstrap} to hold.  Our goal here is not to give a full proof of Theorem \ref{thm:bootstrap}, which largely follows  from results in the literature due to Peter Hall and coauthors, but rather to derive the explicit sufficient conditions that are stated in Theorem \ref{thm:bootstrap}.

In a lengthy calculation, \cite{DHR91} prove thet EL is Bartlett correctable.  Although that is a different result to Theorem \ref{Theorem_5.1} (specifically, the content of Theorem \ref{Theorem_5.1} does not depend on EL being Bartlett correctable), nevertheless the  expansions derived in \cite{DHR91} provide clarification regarding what conditions are sufficient  for Theorem \ref{Theorem_5.1} to hold.  

First, we consider the expansion (4.1) in \cite{DHR91}, which is different to the one relevant to Theorem \ref{Theorem_5.1} in three respects: first, we need to consider all terms up to and including those of  nominal size $O_p(n^{-3})$;  second, we only need to consider Euclidean vector means, as opposed to the `smooth function model' considered in \cite{DHR91}; and third, we need to view the components $X^j$ as being functions of $\x$, where $\x$ lies within a neighbourhood of $\x_0 \in \M$. The nominal size of each term in the expansion  is  $n^{-r/2}$ where $r$ is the number of $A$-terms multiplied together.  Moreover, the nominal size is equal to the actual size, e.g. 
$A^{jk}(\x_0)A^j (x_0)A^k (\x_0) = O_p(n^{-3/2})$, if and only if $\textrm{Var}(A^{jk}(\x_0))=O(n^{-1})$ or, equivalently, if $\textrm{Var}(X^j (\x_0) X^k (\x_0))$ is finite; and e.g. the nominal size, $O_p(n^{-2})$,  of $A^{jk\ell} (\x_0)A^j (x_0)A^k (\x_0) A^\ell(\x_0)$  is the actual size if and only if $\textrm{Var}(A^{jk\ell}(\x_0))=O(n^{-1})$ or, equivalently, if $\textrm{Var}(X^j (\x_0) X^k (\x_0) X^\ell (\x_0))$ is finite.    Inspection of the terms up to and including nominal size $O_p(n^{-3})$ shows that the terms involving the highest moments, in fact fifth moments, are of the form
\[
A^{jk\ell m n}(\x_0)A^j (\x_0) A^k (\x_0)A^{\ell}(\x_0) A^m (\x_0) A^n (\x_0).
\]
For $A^{jk\ell mn}(\x_0)=O_p(n^{-1/2})$ to hold, and therefore for the displayed term immediately above to be of size $O_p(n^{-3})$, it is necessary and sufficient that $\textrm{Var}[A^{jk\ell m n}(\x_0)]=O(n^{-1})$ for all choices of $j,k,\ell, m , n=1, \ldots , p$ which, because $p$, the dimension of $\M$, remains bounded, is equivalent to condition  (\ref{Condition_2}).  

Second, since we are employing boostrap calibration of EL, the $n^{-1}$ term in the expansion is proportional to $n^{-1}\tau (\bar x_n)$ rather than $n^{-1} \tau (\x_0)$, where as before $\bar \x_n$ is the sample Fr\'echet mean.  Under the assumption that $\tau$ has a continuous Hessian in a neighbourhood of $X_0$, we have a Taylor expansion of the form
\begin{equation}
\tau(\bar \x_n)=\tau (\x_0) +n^{-1/2}F_1(\x_0) + n^{-1}F_2(\x_0^\ast),
\label{s27}
\end{equation}
where $\x_0^\ast$ lies on the geodesic connecting $\bar \x_n$ and $\x_0$, where the final term on the RHS of (\ref{s27}) is of the stated order, $O_p(n^{-1})$, due to  the $\sqrt{n}$-consistency of $\bar \x_n$ as an estimator of $\x_0$ and the continuity of the Hessian of $\tau$ at $\x_0$.  Moreover, in the asymptotic expansion of the coverage probability of the bootstrap-calibrated EL confidence region, the contribution of the final term on the RHS of (\ref{s27}) is of size $O(n^{-2})$, due to $\tau$ being multiplied by $n^{-1}$.  

Third, and finally, we address the  question of the size of the contribution of the middle term on the RHS of (\ref{s27}) to the asymptotic expansion of the coverage probability.  
After muliplying by $n^{-1}$, it appears that this term should be of size $n^{-3/2}$.  However, after close scrutiny, it turns out that this term is $O(n^{-2})$, due to the parity properties of the polynomials (Hermite polynomials)  that arise in Edgeworth expansions.  A discussion of this phenomenon in the case of Bartlett correction of parametric likelihood for continuous models is given by \cite{BNH}; the same argument works in other settings provided the relevant population cumulants have expansions in powers of $n^{-1}$, as is the case in the present context.
\end{proof}
\end{appendix}

%
%
%


\bibliographystyle{abbrvnat}
\bibliography{biblio}

\end{document}